\newtheorem{definition}{Definition}[section]
\newtheorem{theorem}{Theorem}[section]
\newtheorem{lemma}[theorem]{Lemma}
\newtheorem{proposition}[theorem]{Proposition}
\newcommand{\nc}{\newcommand}
\newcommand{\mA}{\mathcal{A}}
\nc{\V}{{\cal V}} \nc{\M}{{\cal M}} \nc{\T}{{\cal T}}
\def\u{u}
\nc{\D}{{\cal D}} \nc{\W}{\mathbb W} \nc{\ti}{\tilde}
\nc{\wti}{\widetilde} \nc{\vep}{\varepsilon}
\nc{\R}{{\mathbb R}} \nc{\N}{{\mathbb N}}
\nc{\di}{\displaystyle}
\nc{\pa}{\partial} \nc{\lra}{\longrightarrow}
 \nc{\weak}{\rightharpoonup}
\nc{\weakstar}{\stackrel{\ast}{\rightharpoonup}} \nc{\proof}{{\bf
Proof: }} \nc{\en}{^{\varepsilon,n}} \nc{\Rd}{{{\mathbb R}^{d}}}
\nc{\biting}{\stackrel{\,\,b}{\rightarrow}}
\renewcommand{\div}{{\mathrm{div}}\,}
\nc{\bB}{B}
\nc{\bS}{A}
\nc{\modular}[1]{{\stackrel{ #1}{\longrightarrow\,}}}
\newcommand{\Ref}[1]{{\rm(\ref{#1})}}
\nc{\vd}{\bar{v}} \nc{\zd}{\bar{z}}
\def\bbbone{{\mathchoice {\rm 1\mskip-4mu l}
{\rm 1\mskip-4mu l} {\rm 1\mskip-4.5mu l} {\rm 1\mskip-5mu l}}}
\begin{document}
\begin{frontmatter}
\title{Nonlinear parabolic problems in  Musielak--Orlicz spaces  }
\author{Agnieszka \'Swierczewska-Gwiazda\\[2ex]
}
\address{Institute of Applied Mathematics and Mechanics, \\University of Warsaw,\\Banacha 2, 02-097 Warsaw, Poland,\\phone: +48 22 5544551}
\begin{abstract}
Our studies  are directed to the  existence of weak solutions to a parabolic problem containing a multi-valued term. The problem is formulated in the language of maximal monotone graphs.  We assume that the growth and coercivity conditions of a nonlinear term are prescribed  by means of time and space dependent  $N$--function.  This results in formulation of the problem in generalized Musielak-Orlicz spaces. We are using  density arguments, hence an important step of the proof is a uniform boundedness of appropriate convolution operators in Musielak-Orlicz spaces. For this purpose we shall need to assume a kind of logarithmic H\"older regularity with respect to~$t$ and~$x$. 
\end{abstract}
\begin{keyword}
Musielak -- Orlicz spaces, modular
convergence, nonlinear parabolic inclusion, maximal monotone graph
\end{keyword}
\end{frontmatter}

\section{Introduction}
We concentrate on an abstract parabolic problem.
Let ${\mathcal A}$ be a maximal monotone graph satisfying the assumptions (A1)--(A5) formulated below. We look for $u:Q\to\R$ and $ A:Q\to\R^d$ such that 
\begin{align}\label{P1a}
u_t-\div A = f\quad&{\rm in}\ Q,\\
\label{P1aa}
(\nabla u,A)\in{\mathcal A}(t,x)\quad&{\rm in}\ Q,\\
\label{P2a}
u(0,x)=u_0\quad&{\rm in}\ \Omega,\\
\label{P3a}
u(t,x)=0\quad&{\rm on}\ (0,T)\times\partial\Omega.
\end{align}
where  $\Omega\subset\R^d$ is an open, bounded set with a ${\cal C}^1$ boundary $\partial \Omega$,  $(0,T)$ is the time interval with $T<\infty$, $Q:=(0,T)\times\Omega$  and $\mA(t,x)\subset\R^d\times \R^d$ satisfies the following assumptions for a.a. $(t,x)\in Q$

\begin{enumerate}
\item[{ (A1)}] $\mA$  comes through the origin.
\item[{ (A2)}] $\mA$  is a monotone graph, namely
$$
(A_1-A_2)\cdot (\xi_1-\xi_2) \ge 0 \quad \textrm{ for all } (\xi_1, A_1),(\xi_2,A_2)\in \mA(t,x)\,.
$$
\item[{ (A3)}] $\mA$  is a maximal monotone graph. Let $(\xi_2, A_2)\in \R^d \times \R^d$.
\begin{equation*}\begin{split}
&\textrm{If } ( {A_1} - A_2)\cdot( {\xi_1} - \xi_2) \geq 0 \quad \textrm{ for all }
({\xi_1}, A_1) \in \mA(t,x)\\&
 \textrm{ then } (\xi_2, A_2) \in \mA(t,x).
\end{split}\end{equation*}
\item[{ (A4)}]  $\mA$  is an  {\it $M-$ graph.} There are non-negative $k\in
L^1(Q)$,   $c_*>0$ and $N$-function $M$ such that
\begin{equation*} 
A \cdot \xi \geq -k(t,x) +c_*(M(t,x,|\xi|) + M^*(t,x,|A|)) 
\end{equation*}
 for all $ (\xi,A)\in\mA(t,x).$ By an $N-$function we mean that  $M:\bar Q\times\R_+\to\R_+$, $M(t,x,a)$
  is measurable w.r.t. $(t,x)$ for all $a\in\R_+$ and  continuous w.r.t. $a$ for a.a. $(t,x)\in\bar Q$,  convex in $a$, has superlinear growth, $M(t,x,a)=0$ iff $a=0$ and  
$$\lim_{a\to\infty}\inf_{(t,x)\in Q}\frac{M(t,x,a)}{a}=\infty.$$
 Moreover the conjugate function $M^*$ is defined as 
$$M^*(t,x,b)=\sup_{a\in{\mathbb R}_+}(b\cdot a-M(t,x,a)).$$
\item[{ (A5)}]  The existence of a measurable selection.  Either there is $\tilde A:Q\times \mathbb{R}^{d} \to
\mathbb{R}^{d}$ such that $(\xi,
\tilde A(t,x,\xi)) \in \mA(t,x)$ for all $\xi \in \Rd$ and
$\tilde A$ is measurable, 
 or there is $\tilde \xi:Q\times \mathbb{R}^{d} \to \mathbb{R}^{d}$ such
that $(\tilde\xi(t,x,A), A) \in \mA(t,x)$ for all $A \in \R^{d}$ and $\tilde\xi$ is measurable.
\end{enumerate}
%
%
We are interested in existence of weak solutions. As the graph $\mA$ depends on $t$ and $x$,
we wanted to include the possibility of the growth conditions which are  time- and space-dependent. Hence 
 the growth 
conditions are also prescribed by a $(t,x)-$dependent $N-$ function.
 The studies are directed to the case of full generality in the upper and lower growth  of an $N-$ function with respect to the last variable. The consequence of relaxing this dependence  is the assumption of  higher  regularity with respect to  $t$ and $x$. More precisely,  
we will assume  log-H\"older continuos dependence on $t$ and $x$ of the function  $M$ and its conjugate $M^*$, i.e.,  it is supposed to  satisfy  the following:
\begin{itemize}
\item[(M1)] there exists a constant $H>0$ such that for all $x,y\in \Omega, t,s\in[0,T], |x-y|+ |t-s|\le\frac{1}{2}$
\begin{equation}\label{log}
\frac{M(t,x,a)}{M(s,y,a)}\le a^\frac{H}{\ln\frac{1}{|t-s|+|x-y|}}
\end{equation}
for all $a\in\R_+$ and moreover 
for every  bounded measurable set $G\subset \bar Q$  and every $z\in\R_+$
\begin{equation}\label{int}
\int_G M(t,x,z)< \infty.
\end{equation}
We assume that the same conditions hold also for $M^*$ in place of~$M$. 
\end{itemize}

The presented framework extends the result presented in~\cite{GwSw2010} in few directions. First of all we formulate the problem including the inclusion in the system. This formulation allows for capturing the problem of implicit relation between $A$ and $\nabla u$, and also allows for description 
of discontinuous dependence of $A$ on $\nabla u$. This kind of approach for problems of fluid mechanics was presented in \cite{BuGwMaRaSw2012, BuGwMaSw2012, GwMaSw2007} and also for steady problems in \cite{BuGwMaSw2009}. The articles \cite{ BuGwMaSw2009, 
GwMaSw2007} concern the setting in $L^p$ spaces, whereas \cite{BuGwMaRaSw2012,BuGwMaSw2012} concern the formulation in Orlicz spaces. Abstract elliptic and parabolic systems including inclusions in $L^p$ setting were considered in 
\cite{GwZa2005, GwZa2005a,GwZa2007}. Another novelty lies in the function space of  solutions. Because of time and space dependent growth--coercivity  conditions we work in Musielak--Orlicz spaces with $(t,x)-$dependent modulars. 
Having the restriction on the growth of an $N-$function and/or its conjugate (in particular $\Delta_2-$condition\footnote{We say that an $N-$function $M$ satisfies $\Delta_2$ condition if there exists a nonnegative function $h\in L^1(Q)$ and a constant $c>0$ such that $M(t,x,2a)\le c M(t,x,a)+h(t,x)$ for all $a\in\R_+$ and a.a. $(t,x)\in\bar Q$.})
simplifies the limit passage from an approximate to original problem. 
The previous results omitting the assumption of $\Delta_2-$condition on the conjugate function and relying on density arguments treated the case of $(t,x)-$independent modulars. This was related 
with approximation properties in Orlicz spaces and consequently using the tools of modular convergence. Allowing for the space and time dependence of the modular requires the information on the uniform boundedness of convolution operators. In particular, the dependence of the modular on $t$ is related with crucial difficulties appearing in the approximation of time derivative. 
In \cite{GwSw2010} the anisotropic spaces were considered. Here we cannot follow the same scheme.
The reasons are more detaily clarified in Section~\ref{preliminaries}.
Therefore we omit the generality of anisotropic $N-$function, restrict ourselves to isotropic one.

The studies on parabolic equations in Orlicz spaces have been a topic for many years, starting from the work of Donaldson~\cite{Donaldson}
and with later results of Benkirane, Elmahi and Meskine, cf.~\cite{BeEl1999, ElmahiMeskine, ElMe2005}.
 All of them concern the case of classical spaces, namely
Orlicz spaces with an $N-$function dependent only on $|\xi|$ without the dependence on $(t,x)$.  
Our important goal is to omit any restriction on the growth of an $N-$func\-tion, in particular the $\Delta_2-$condition for an $N-$function and its conjugate. This results in a need of formulating the approximation theorem (Theorem~\ref{closures}) and extensively using the notion of modular convergence (the precise definition appears in a sequel). The fundamental studies in this direction are due to Gossez for the case of elliptic equations \cite{Gossez1, Gossez2}. The appearance of  $(t,x)$ dependence in an $N-$function requires the studies on the uniform boundedness of the convolution operator.  
The considerations on the problem with an  $x-$dependent modular formulated in Musielak--Orlicz--Sobolev space is due to Benkirane et al. \cite{Be2011}. The authors formulate an approximation theorem with respect to the modular topology.
A particular case of Musielak-Orlicz spaces with $x-$dependent modulars are the variable exponent spaces $L^{p(x)}$, see 
e.g.~\cite{DiHaHaRu2011} for a comprehensive summary. The issue of density of smooth functions in this kind of spaces was considered e.g. in~\cite{FaWaZh2006, Zh2004}.

Before defining weak solutions we will  collect the notation.  
By the generalized Musielak-Orlicz class ${\mathcal L}_M(Q)$ we mean 
 the set of all measurable functions
 $\xi:Q\to\R^{d}$ for which the modular  $$\rho_M(\xi)=\int_Q M(t,x,|\xi(t,x)|) \,dx\,dt$$ is finite. 
 By $L_M(Q)$ we mean the generalized Orlicz space which is the set of
all measurable functions
 $\xi:Q\to\R^{d}$ for  which $\rho_M(\alpha\xi)\to0$ as $\alpha\to
 0.$
This is a Banach space with respect to the Luxembourg norm
$$\|\xi\|_M=\inf\left\{\lambda>0 : \int_Q M(t,x,|\xi(t,x)|)
\,dx\,dt\le1\right\}$$ 
or the equivalent Orlicz norm
$$|||\xi|||_M=\sup\left\{\int_Q \eta\cdot\xi \,dx\,dt : \eta\in L_{M^\ast}(Q),\int_Q M^\ast(t,x,|\eta(t,x)|)
\,dx\,dt\le1\right\}.$$ 
By $E_M(Q)$ we denote the closure of all bounded functions in
$L_M(Q)$. The space $L_{M^\ast}(Q)$ is the dual space of
$E_M(Q)$.
 A sequence $z^j$ converges modularly to $z$ in $L_M(Q)$ if
there exists $\lambda>0$ such that
$$\rho_M\left(\frac{z^j-z}{\lambda}\right)\to0$$
which is  denoted   by $z^j\modular{M} z$ for the modular
convergence in $L_M(Q)$. 



We use the notation ${\cal C}_{\textrm{weak}}(0,T; L^{2}(\Omega))$ for the space of all functions which are in 
$L^\infty(0,T; L^{2}(\Omega))$ such that $(u,\varphi)\in{\cal C}([0,T])$ for all $\varphi\in{\cal C}(\bar\Omega)$. Moreover, by 
${\cal C}_c^\infty(D)$ we mean the set of all compactly supported in $D$ smooth functions.
\begin{definition}\label{d:1}
Assume that
$
u_0\in L^{2}(\Omega), f\in L^\infty(Q)$ and $\mA$ be a maximal monotone graph.
We say that $(u,A)$ is a weak solution to \eqref{P1a}-\eqref{P3a}  if
\begin{align*}
&u \in  {\cal C}_{\textrm{weak}}(0,T; L^{2}(\Omega)),\  \nabla u \in L_M(Q),\ 
\ A\in  L_{M^\ast}(Q)
\end{align*}
and
\begin{equation}
\begin{split}
\int_Q \left(-u \varphi_t +A
\cdot \nabla \varphi \right) dx dt+\int_\Omega u_0(x)\varphi(0,x) dx
=\int_Qf\varphi dxdt
\end{split}
\end{equation}
 holds  for all $\varphi\in{\cal C}_c^\infty((-\infty,T)\times\Omega)$ and 
\begin{align*}
\left ( \nabla u ((t,x)), A(t,x) \right ) \in \mA(t,x)  \textrm{ for a.a. }
(t,x) \in Q.
\end{align*}
\end{definition}
Below  the main result of the present paper is formulated. 


\begin{theorem}\label{main2}
Let $M$ be an $N$--function satisfying (M1) and let  $\mA$ satisfy
conditions (A1)--(A5). Given $f\in L^\infty(Q) $
and
 $u_0\in
L^2(\Omega)$ 
there exists a weak solution to \eqref{P1a}--\eqref{P3a}.

\end{theorem}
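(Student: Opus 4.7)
The strategy is a double approximation combined with a Minty monotonicity argument adapted to the modular topology of $L_M(Q)$. Using the measurable selection granted by (A5), say $\tilde A(t,x,\xi)$, I replace $\mA$ by a Yosida-type single-valued Lipschitz monotone map $\tilde A^\lambda$ equipped with an additional $p$-Laplacian-type strictly monotone term (e.g.\ $\varepsilon|\xi|^{p-2}\xi$) that restores $p$-coercivity at the approximate level. In a Galerkin basis $\{w_j\}_{j=1}^\infty\subset\mathcal{C}_c^\infty(\Omega)$, for instance Dirichlet eigenfunctions of $-\Delta$, I solve the finite-dimensional ODE
\begin{equation*}
(u^{n,\lambda}_t,w_j)+\int_\Omega \tilde A^\lambda(t,x,\nabla u^{n,\lambda})\cdot\nabla w_j\,dx=(f,w_j),\qquad j=1,\ldots,n,
\end{equation*}
with projected initial data. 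Carath\'eodory theory gives a solution on $[0,T]$, and the construction is arranged so that the subsequent a priori bounds are uniform in $n$ and $\lambda$.

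Testing the approximate system against $u^{n,\lambda}$ and invoking the coercivity (A4) yields
\begin{equation*}
\sup_{t\in[0,T]}\|u^{n,\lambda}(t)\|_{L^2}^2+\int_Q\bigl(M(t,x,|\nabla u^{n,\lambda}|)+M^*(t,x,|A^{n,\lambda}|)\bigr)\,dx\,dt\le C,
\end{equation*}
where $A^{n,\lambda}:=\tilde A^\lambda(\cdot,\cdot,\nabla u^{n,\lambda})$. By de la Vall\'ee--Poussin, up to subsequences, $\nabla u^{n,\lambda}\weakstar \nabla u$ in $L_M(Q)$ and $A^{n,\lambda}\weakstar A$ in $L_{M^*}(Q)$ in duality with $E_{M^*}$ and $E_M$ respectively. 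A bound on $u^{n,\lambda}_t$ in a negative Sobolev norm combined with Aubin--Lions upgrades $u^{n,\lambda}\to u$ strongly in $L^1(Q)$ and pointwise a.e.\ after extraction. Passing first $n\to\infty$ and then $\lambda\to 0$ produces a candidate $(u,A)$ satisfying the linear part of the weak equation.

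The core of the proof is to identify $(\nabla u,A)\in\mA$ a.e. For an arbitrary measurable pair $(\xi,B)$ with values in $\mA$, constructed again from (A5), monotonicity at the approximate level gives
\begin{equation*}
\int_Q(A^{n,\lambda}-B)\cdot(\nabla u^{n,\lambda}-\xi)\,dx\,dt\ge 0.
\end{equation*}
Three of the four resulting terms are controlled by the weak convergences above; the remaining cross term $\int_Q A^{n,\lambda}\cdot\nabla u^{n,\lambda}$ is evaluated through the approximate energy identity obtained by testing the equation by $u^{n,\lambda}$ itself, and its $\limsup$ is bounded above by $\int_Q A\cdot\nabla u$ via weak lower semicontinuity of the $L^2$-norm at time $T$. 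Closing the Minty inequality and appealing to the maximality axiom (A3) then yields $(\nabla u(t,x),A(t,x))\in\mA(t,x)$ almost everywhere.

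The principal obstacle is precisely the recovery of this energy identity in the limit: it requires using $u$ itself as a test function in the limiting weak formulation, yet $u$ lives in a Musielak--Orlicz space in which, in the absence of $\Delta_2$, smooth functions are not norm-dense. I therefore mollify $u$ in space and time, $u_k:=u*\rho_k$, and invoke the approximation Theorem~\ref{closures}: the log-H\"older hypothesis (M1), required of both $M$ and $M^*$, delivers uniform boundedness of the space--time convolution operator on $L_M(Q)$ and $L_{M^*}(Q)$, and hence the modular convergences $u_k\modular{M}u$ and $\nabla u_k\modular{M}\nabla u$. This is exactly the point at which (M1) is indispensable; without it the time-derivative term cannot be treated, the energy identity is lost, and the Minty closure argument fails.
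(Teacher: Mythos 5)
Your architecture matches the paper's at the structural level: regularize the graph to a single-valued monotone map, Galerkin, energy estimates, weak-$*$ limits, Minty-type identification, with (M1) entering precisely to recover the energy identity via the modular-approximation Theorem~\ref{Aproksymacja}. You have correctly isolated the crux --- without uniform boundedness of the convolution operator on $L_M$ and $L_{M^*}$ one cannot test the limit equation with $u$ itself, so the $\limsup$ condition~\eqref{Ass} required by Lemma~\ref{Minty2} cannot be verified.

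However, you take a genuinely different route in the regularization, and several shortcuts need shoring up. The paper does not Yosida-regularize; it mollifies the measurable selection $\tilde A$ in the $\xi$ variable, $A^\varepsilon=\tilde A*K^\varepsilon$, which manifestly preserves measurability in $(t,x)$ and the coercivity (A4) via Jensen's inequality, and it passes $\varepsilon\to0$ \emph{first} at fixed $n$, exploiting the strong convergence $u^{\varepsilon,n}\to u^n$ in ${\cal C}([0,T];{\cal C}^1(\bar\Omega))$ so that the graph membership $(\nabla u^n,A^n)\in\mA$ is recovered pointwise via Lemma~\ref{LS*}; the genuinely hard Minty step, Lemma~\ref{Minty2}, is then needed only once, for $n\to\infty$. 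In your scheme the coercivity of $\tilde A^\lambda$ holds only through the resolvent $J^\lambda$, since $\tilde A^\lambda(\xi)\cdot\xi\ge\tilde A^\lambda(\xi)\cdot J^\lambda\xi$ with $(J^\lambda\xi,\tilde A^\lambda\xi)\in\mA$, which is why you need the $p$-Laplacian crutch; but you never say how or in which order the penalty parameter $\varepsilon$ is removed, nor how joint measurability of $\tilde A^\lambda$ in $(t,x,\xi)$ is secured. Two further points are glossed over: (i) a raw space--time mollification $u_k:=u*\rho_k$ does not preserve the zero trace on $\partial\Omega$; the operator ${\cal S}_\delta$ of Theorem~\ref{Aproksymacja} is built with the boundary-deforming map $\Psi^\delta$ of Proposition~\ref{psi-delta} precisely for this reason, so you must use that construction rather than a plain convolution; and (ii) concluding ``by (A3)'' at the very end is too fast in this non-reflexive setting --- the closure requires the truncation argument of Lemma~\ref{Minty2} (restriction to $Q_{(K)}$ and $Q_j$), because the comparison field must lie in $L^\infty$ and weak-$*$ limits in $L_{M^*}$ act only in duality with $E_M$, and it also requires the weak $L^2(\Omega)$-continuity of $u(t)$ near $t=0$ (from $W^{-r,2}$-equicontinuity of $u^n$) in order to pass $s_0\to0$ in the energy identity, a step your proposal does not address.
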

%
%

The paper is organized as follows: In Section~\ref{preliminaries} we collect some fine properties of Musielak-Orlicz spaces and shortly describe the procedure of preparing the boundary to further approximation. The details are moved to the Appendix. Section~\ref{closures} concentrates on the approximation theorem and Section~\ref{4} is devoted to the proof of the existence result, namely Theorem~\ref{main2}. The last short section contains some examples of functions captured by the desired framework. The paper is completed by the appendix, where we collect necessary facts for handling the multi-valued problem. Finally, we provide some comments for possible extensions or different approaches. 

\section{Preliminaries}\label{preliminaries}
\begin{lemma}
Let $M$ and $M^*$ be conjugated $N-$functions. Then  for all $\xi\in L_M(Q)$ and $\eta\in L_{M^*}(Q)$ the following inequalities hold:
\begin{enumerate}
\item H\"older inequality
\begin{equation}\label{hoelder}
\int_Q \xi \eta \,dx\,dt\le c\|\xi\|_M\|\eta\|_{M^*}.
\end{equation}
\item Fenchel-Young inequality
\begin{equation}\label{F-Y}
|\xi\cdot\eta|\le M(t,x,\xi)+M^*(t,x,\eta). 
\end{equation}
\end{enumerate}
\end{lemma}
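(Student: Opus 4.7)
Both inequalities are standard consequences of the definition of the conjugate $N$-function; the only point to verify is that the pointwise $(t,x)$-dependence of $M$ does not spoil the classical argument, which it does not because the definition of $M^*$ is taken pointwise in $(t,x)$ and measurability is ensured by (A4). My plan is to prove the Fenchel--Young inequality first and then derive Hölder from it by the usual scaling argument against the Luxemburg norm.

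For \eqref{F-Y} I would start from the very definition
\[
M^*(t,x,b)=\sup_{a\in\R_+}\bigl(ab-M(t,x,a)\bigr),
\]
which gives, for almost every $(t,x)\in Q$ and every $a,b\in\R_+$, the scalar bound $ab\le M(t,x,a)+M^*(t,x,b)$. For vector-valued $\xi,\eta:Q\to\R^d$ I then combine this with the elementary Cauchy--Schwarz estimate $|\xi\cdot\eta|\le|\xi|\,|\eta|$ and apply the scalar inequality with $a=|\xi(t,x)|$, $b=|\eta(t,x)|$; since the arguments of $M$ and $M^*$ in (A4) depend only on the modulus, this yields \eqref{F-Y} pointwise a.e. in $Q$.

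For the Hölder-type inequality \eqref{hoelder} I can assume that both $\alpha:=\|\xi\|_M$ and $\beta:=\|\eta\|_{M^*}$ are strictly positive, the other case being trivial. By the definition of the Luxemburg norm together with the continuity and convexity of $M$ in its last variable (so that the infimum in the Luxemburg norm is actually attained in the limit), one has the two ``unit-ball'' estimates
\[
\int_Q M\!\left(t,x,\frac{|\xi(t,x)|}{\alpha}\right)dx\,dt\le 1,\qquad
\int_Q M^*\!\left(t,x,\frac{|\eta(t,x)|}{\beta}\right)dx\,dt\le 1.
\]
Integrating the pointwise Fenchel--Young inequality applied to $\xi/\alpha$ and $\eta/\beta$ then produces
\[
\frac{1}{\alpha\beta}\int_Q|\xi\cdot\eta|\,dx\,dt
\le \int_Q M\!\left(t,x,\tfrac{|\xi|}{\alpha}\right)+M^*\!\left(t,x,\tfrac{|\eta|}{\beta}\right)dx\,dt\le 2,
\]
so \eqref{hoelder} holds with constant $c=2$.

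The argument is entirely classical, so I do not expect a real obstacle; the only mildly delicate point is to justify, via the lower semicontinuity of the modular and the convexity/monotonicity of $M$, the two unit-ball inequalities that turn the infimum defining the Luxemburg norm into an attained bound. Once that is in hand, the Musielak--Orlicz setting adds nothing new: the constant $c$ in \eqref{hoelder} is independent of $M$ (one may take $c=2$, or $c=1$ if the Orlicz norm $|||\cdot|||_M$ is used on one of the factors, by the standard duality with $E_M$).
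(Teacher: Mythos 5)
Your proof is correct and is precisely the standard argument that the paper defers to by citing Musielak's monograph: Fenchel--Young follows pointwise a.e.\ from the definition of $M^*$ together with $|\xi\cdot\eta|\le|\xi|\,|\eta|$, and H\"older then follows by normalizing with the Luxemburg norms, using monotone convergence to show $\int_Q M(t,x,|\xi|/\|\xi\|_M)\,dx\,dt\le 1$, and integrating to obtain $c=2$. Nothing in the $(t,x)$-dependence changes this argument, as you correctly note.
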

For the proof   see~\cite{Musielak}. 

%

In the next section we will approximate the function having a zero trace on the boundary of $\Omega$. The standard procedure in the case of at least Lipschitz boundaries is to observe that the domain is equal to the sum of star-shaped domains and proceed with an appropriate partition of unity and scaling the function on star-shaped sets.  However proceeding with the partition of unity leads to the necessity of either using the Poincar\'e inequality or truncating the function. The first option needs an additional set of assumptions, since the Poincar\'e inequality in Musielak-Orlicz spaces is a non-trivial fact, cf.~\cite{Fa2012}. We recall more details in part C of  the appendix. The option of truncating the function, which was used also in \cite{GwSw2010} would need the integration by parts formula for truncations, which in the case of time-dependent modulars does not hold. For these reasons we use here a non-standard approximation method, which consists in constructing a mapping wh
 ich transfe
 rs the
  area near the boundary of $\Omega$ to its interior.

\begin{proposition}\label{psi-delta}
There exists a mapping $\Psi^\delta:\Omega\to\R^d$  such that
\begin{enumerate}
\item[(i)] there exists a constant $K_1>0$ such that
$$\inf\limits_{x\in\Omega, y\in\partial\Omega}|\Psi^\delta(x)-y|\ge K_1\delta,$$
\item[(ii)] there exists a constant $K_2>0$ such that
$$\sup\limits_{x\in\Omega}|\Psi^\delta(x)-x|\le K_2\delta.$$
\item[(iii)]
$$\sup\limits_{x\in\Omega}|\nabla \Psi^\delta(x)- {\bf 1}|\to0$$
as $\delta\to0$ and where ${\bf 1}$ is an identity matrix. 
\end{enumerate}
\end{proposition}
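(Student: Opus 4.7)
The plan is to build $\Psi^\delta$ as an affine perturbation of the identity, $\Psi^\delta(x)=x+\delta V(x)$, where $V\in \mathcal{C}^1(\bar\Omega;\R^d)$ is a fixed vector field pointing strictly inward at every boundary point. Once such a $V$ is available, properties (ii) and (iii) are immediate: $|\Psi^\delta(x)-x|=\delta|V(x)|\le \delta\|V\|_\infty$, and $\nabla\Psi^\delta(x)=\b{1}+\delta\nabla V(x)$, so $\sup_{x\in\Omega}|\nabla\Psi^\delta(x)-\b{1}|\le \delta\|\nabla V\|_\infty\to 0$ as $\delta\to 0$.

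The construction of $V$ uses the $\mathcal{C}^1$ regularity of $\partial\Omega$ in a standard way. First I would cover $\partial\Omega$ by finitely many open balls $U_1,\dots,U_N$ in $\R^d$ such that in each $U_i$, after a rigid motion, $\partial\Omega\cap U_i$ is the graph of a $\mathcal{C}^1$ function and $\Omega\cap U_i$ lies on one prescribed side. In each local chart the constant vector $e_d$ (pointing into $\Omega$) pulls back to a $\mathcal{C}^1$ vector field $V_i$ on $U_i\cap\bar\Omega$ whose scalar product with the outward unit normal $n$ is bounded above by some $-c_i<0$ on $\partial\Omega\cap U_i$. Choosing a $\mathcal{C}^1$ partition of unity $\{\phi_i\}_{i=0}^N$ with $\phi_0$ supported in the interior of $\Omega$, I set $V=\sum_{i=1}^N \phi_i V_i$ on $\bar\Omega$ (the term for $i=0$ is taken as $0$, since nothing needs to happen away from the boundary). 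Convexity of the inward half-space at each boundary point yields a uniform bound $V\cdot n\le -c_0<0$ on $\partial\Omega$.

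To verify (i), I would separate points close to and far from $\partial\Omega$. By the $\mathcal{C}^1$ regularity there is a tubular neighbourhood of width $\eta_0>0$ on which the nearest-point projection $\pi:\bar\Omega\to\partial\Omega$ is continuous and well-defined, and the signed distance $d(x)=\mathrm{dist}(x,\partial\Omega)$ satisfies $x-\pi(x)=-d(x)n(\pi(x))$. For $x$ with $d(x)\le \eta_0$ and $\delta$ small,
\[
-(\Psi^\delta(x)-\pi(x))\cdot n(\pi(x)) = d(x)-\delta V(x)\cdot n(\pi(x)) \;\ge\; d(x)+\tfrac{c_0}{2}\delta\ \ge\ \tfrac{c_0}{2}\delta,
\]
where continuity of $V$ and $n\circ\pi$ is used to absorb the error $V(x)\cdot n(\pi(x))-V(\pi(x))\cdot n(\pi(x))$ into $c_0/2$. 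This pins $\Psi^\delta(x)$ strictly inside $\Omega$ at distance $\ge \tfrac{c_0}{2}\delta$ from $\partial\Omega$. For $x$ with $d(x)\ge\eta_0$ the trivial bound $|\Psi^\delta(x)-y|\ge |x-y|-\delta\|V\|_\infty\ge \eta_0-\delta\|V\|_\infty\ge \eta_0/2$ is enough, which again dominates $K_1\delta$ for all sufficiently small $\delta$. Picking $K_1=c_0/2$ and taking $\delta$ below an explicit threshold concludes (i); for larger $\delta$ the statement is vacuous or one simply rescales.

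The main obstacle is to make the transition between the two regimes above quantitatively uniform, that is, to pin down the threshold on $\delta$ and the constant $c_0$ purely in terms of $\partial\Omega$; the rest is bookkeeping. Because $\partial\Omega$ is only $\mathcal{C}^1$, the normal field is continuous but not Lipschitz, so I cannot use a Taylor expansion of $n$; however continuity together with compactness of $\partial\Omega$ gives a uniform modulus of continuity, which is precisely what is needed for the absorption step in the estimate above. No higher regularity on $\partial\Omega$ is required, consistent with the hypotheses of the paper.
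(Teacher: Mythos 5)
Your construction $\Psi^\delta = \mathrm{id} + \delta V$ with a fixed inward-pointing $\mathcal{C}^1$ vector field $V$ is a genuinely different route from the paper's: the paper flattens $\partial\Omega$ with the $\mathcal{C}^1$ charts $\Phi_\alpha$, compresses the normal coordinate there via an explicit map $\gamma^\delta$, and builds $\Psi^\delta$ as a composition of the resulting chart-level deformations. Your approach is cleaner for (ii) and (iii), which indeed become one-line computations, and if it worked it would be preferable.

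The verification of (i), however, has a real gap. You invoke a tubular neighbourhood of $\partial\Omega$ on which the nearest-point projection $\pi$ is single-valued and continuous, attributing this to $\mathcal{C}^1$ regularity. That is not a $\mathcal{C}^1$ fact: single-valuedness of the metric projection in a full neighbourhood is equivalent to positive reach, which for hypersurfaces is a $\mathcal{C}^{1,1}$-type (bounded curvature) condition. For a plain $\mathcal{C}^1$ boundary it can fail at every scale. A concrete two-dimensional obstruction: take $\partial\Omega$ locally equal to the graph of $f(x)=|x|^{3/2}$ near the origin, which is $\mathcal{C}^1$ with $f'(0)=0$; for every $\varepsilon>0$ the point $(0,\varepsilon)$ on the concave side has two distinct nearest points on the graph (the unique critical point of the squared distance on $\{x>0\}$ lies at $x\approx \tfrac{9}{4}\varepsilon^2>0$ and beats $x=0$, and the reflected point also works), so no tubular neighbourhood with a well-defined $\pi$ exists. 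One can salvage part of the argument by noting that the relation $x-\pi(x)=-d(x)\,n(\pi(x))$ holds for \emph{any} choice of nearest point, so single-valuedness per se is not what you need; but your next step then overreaches. The inequality $-(\Psi^\delta(x)-\pi(x))\cdot n(\pi(x))\ge \tfrac{c_0}{2}\delta$ bounds the component of $\Psi^\delta(x)-\pi(x)$ along $n(\pi(x))$; it does not by itself give $\mathrm{dist}(\Psi^\delta(x),\partial\Omega)\ge \tfrac{c_0}{2}\delta$, because $\partial\Omega$ is not the tangent hyperplane at $\pi(x)$ and can re-enter the half-space $\{y:(y-\pi(x))\cdot n(\pi(x))<-\tfrac{c_0}{2}\delta\}$. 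Turning the normal-component bound into an actual distance bound requires a two-scale case analysis on $|q-\pi(x)|$ for $q\in\partial\Omega$, using the uniform modulus of continuity of the local graph representation (small $|q-\pi(x)|$: the boundary stays inside a narrow cone around the tangent plane; large $|q-\pi(x)|$: a direct triangle inequality). You gesture at the modulus-of-continuity idea at the end, but it is precisely this quantitative step, together with dropping the unjustified tubular-neighbourhood claim, that is the actual content of (i) and is not carried out. The paper's chart-by-chart construction sidesteps both issues because the distance estimate is done where the boundary has literally been flattened, and is then pulled back through the bi-Lipschitz maps $\Phi_\alpha,\Phi_\alpha^{-1}$.
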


The construction of the mapping $\Psi^\delta$ and the proof of its properties is moved to the appendix, part A. 

\bigskip


The next lemma is an important tool for the approximation theorem presented in the next section. An analogous result in the case of standard procedure, namely division for star-shaped domains and only $x-$dependent modulars was presented by  Benkirane et al.~\cite{Be2011}, see also 
\cite{GwMiWr2012} for the extension to an anisotropic case.

\begin{lemma}\label{modular-topology}
Let  $S\in
{\cal C}^\infty_c(\R^{d+1})$,
$\int_{\R^{d+1}} S(\tau,y)\,dy\,d\tau=1$ and  $S(t,x)=S(-t,-x)$. We define
$ S_\delta(t,x):=1/\delta^{d+1}S(t/\delta,x/\delta).$ 
Consider the family of operators
\begin{equation}\label{Sdelta}
\begin{split}
 {\cal S}_\delta z(t,x):=\int_Q
  S_\delta(t-s,\Psi^\delta(x)-y)z\left(s,y\right)  \,dy\,ds.
\end{split}\end{equation}
Let an $N-$function satisfy condition (M1).
Then there exist a  constants $c>0$ (independent of $\delta$) such that 
 for every  $ z\in L_M(Q)$ the following estimate holds
\begin{equation}\label{cont2}
\int_Q M(t,x, |{\cal S}_\delta z(t,x))|)\,dx\,dt\le c\int_Q M(t,x,| z(t,x)|)\,dx\,dt.
\end{equation}

\end{lemma}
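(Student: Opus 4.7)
The plan is to combine Jensen's inequality on the convex $N$-function $M(t,x,\cdot)$ with a pointwise swap of the modular argument from $(t,x)$ to $(s,y)$ via the log-H\"older bound in (M1), and then absorb the resulting exponential factor by splitting $z$ according to its magnitude. Extending $z$ by zero outside $Q$ and using $\int_{\R^{d+1}} S_\delta = 1$, Jensen's inequality gives
\begin{equation*}
M(t,x, |\mathcal{S}_\delta z(t,x)|) \leq \int_Q S_\delta(t-s, \Psi^\delta(x)-y)\, M(t,x, |z(s,y)|)\, dy\, ds,
\end{equation*}
and integrating over $Q$ reduces the claim to bounding the double integral of the right-hand side by $c \int_Q M(s,y, |z(s,y)|)\, ds\, dy$ uniformly in $\delta$.

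Next I would localise the support. Since $S$ is compactly supported and Proposition~\ref{psi-delta}(ii) gives $|\Psi^\delta(x) - x| \leq K_2\delta$, on the support of $S_\delta(t-s, \Psi^\delta(x)-y)$ one has $|t-s| + |x-y| \leq C_0 \delta$ for a constant $C_0$ depending only on $S$ and $K_2$. For $\delta$ so small that $C_0 \delta \leq 1/2$, condition (M1) applies and yields
\begin{equation*}
M(t,x, |z(s,y)|) \leq |z(s,y)|^{H/\ln(1/(C_0 \delta))}\, M(s,y, |z(s,y)|).
\end{equation*}
After Fubini, the inner $(t,x)$-integral of $S_\delta$ is uniformly bounded in $(s,y)$ and $\delta$ via the change of variables $\tilde{x} = \Psi^\delta(x)$ together with Proposition~\ref{psi-delta}(iii), which controls the Jacobian. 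Hence the problem reduces to showing
\begin{equation*}
\int_Q |z(s,y)|^{H/\ln(1/(C_0 \delta))}\, M(s,y, |z(s,y)|)\, dy\, ds \leq c \int_Q M(s,y, |z(s,y)|)\, dy\, ds.
\end{equation*}

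Finally I would split the integration domain. On the set $\{|z(s,y)| \leq (C_0 \delta)^{-1/H}\}$ the exponential factor is at most $e$, so the desired bound follows immediately. On the complement, $|z|$ is very large, and the superlinear growth of $M$ built into (A4) ensures $M(s,y, |z|) \geq |z|$ past a uniform threshold; therefore $|z|^{H/\ln(1/(C_0 \delta))} \leq M(s,y, |z|)^{H/\ln(1/(C_0 \delta))}$, and the integrand takes the form $M^{1+o_\delta(1)}$. A further dichotomy on whether $M(s,y, |z|) \leq (C_0 \delta)^{-1}$ absorbs the small excess exponent at the cost of a $\delta$-independent constant. The main obstacle is precisely this last absorption step: the factor $|z|^{H/\ln(1/(C_0 \delta))}$ is pointwise unbounded on the set where $|z|$ is large, and condition (M1) is calibrated so that the log-H\"older exponent $H$ together with the superlinear growth of $M$ suffices to control the product by $c\, M$ with $c$ independent of $\delta$; this quantitative balance between log-H\"older regularity and $N$-function growth is the crux of the proof and the reason (M1) takes its stated form.
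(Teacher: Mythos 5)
Your reduction---Jensen for $M(t,x,\cdot)$ followed by the log-H\"older swap applied to $|z(s,y)|$---leads to the target inequality
\begin{equation*}
\int_Q |z|^{\varepsilon_\delta}\, M(s,y,|z|)\,dy\,ds \;\le\; c\int_Q M(s,y,|z|)\,dy\,ds,
\qquad \varepsilon_\delta := \frac{H}{\ln\frac{1}{C_0\delta}},
\end{equation*}
but this cannot hold with $c$ independent of $\delta$ and $z$. Already for $M(s,y,a)=a^2$, which satisfies (M1) trivially, take $z$ equal to a constant $L_\delta := e^{1/\varepsilon_\delta^2}$ on a set of measure $L_\delta^{-2}$ and zero elsewhere: then $\rho_M(z)=1$ while the left side equals $L_\delta^{\varepsilon_\delta}=e^{1/\varepsilon_\delta}\to\infty$ as $\delta\to 0^+$. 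The dichotomies you propose do not close this gap: on the set where $M>(C_0\delta)^{-1}$ the factor $M^{\varepsilon_\delta}$ is still unbounded, and iterating through levels $M\sim(C_0\delta)^{-k}$ produces constants of order $e^{kH}$ that grow without control in $k$.

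The missing idea is to apply the log-H\"older estimate (M1) not to $|z(s,y)|$, which is unbounded, but to $|{\cal S}_\delta z(t,x)|$, for which one first proves the pointwise bound $|{\cal S}_\delta z(t,x)|\le c\,\delta^{-(d+1)}$ (after normalizing $\|z\|_M\le 1$, by H\"older against the $L^\infty$ bound of $S_\delta$). Because this quantity is only polynomially large in $1/\delta$, the log-H\"older exponent cancels it: $(\delta^{-(d+1)})^{H/\ln(1/\delta)}\le e^{(d+1)H}$, uniformly in $\delta$. In the paper this bound is used to replace $M(t,x,\cdot)$ by a frozen modular $m^\delta_{i,k}(\cdot):=\inf M(s,y,\cdot)$ over small space--time cubes, and only \emph{then} is Jensen applied---to $m^\delta_{i,k}$, which has no $(t,x)$-dependence and is dominated from above by $M(s,y,\cdot)$ on the relevant cube. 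The order of operations (pointwise bound on the convolution, then log-H\"older, then Jensen with a locally constant $N$-function) is what your proposal inverts, and that inversion is where the argument breaks.
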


\begin{proof}
%
Extend $z\in L_M(Q)$ by zero in the neighbourhood of the boundary of $\Omega$ outside of the image of $\Psi^\delta$. Due to this procedure the convolution with a kernel $S_\delta$ shall not lose the information of zero trace on the boundary. 
 Let ${\cal S}_\delta z(t,x)$ be defined by~\eqref{Sdelta}. 
For every $\delta>0$ there exists $N=N(\delta)$ such that  a family of closed
cubes $\{D_{\delta,k}\}_{k=1}^N$ with disjoint interiors and the length of an edge equal to $\delta$ covers $\Omega$, i.e.
$\Omega\subset \bigcup_{k=1}^ND_{\delta,k}$. Then consider the family of cubes centered the same as $D_{\delta,k}$ with an edge of the length $2\delta$. We shall call this family $\{G_{\delta,k}\}$. Note that if $x\in D_{\delta,k}$, then there exist $2^d$ cubes 
$G_{\delta,k}$ such that $x\in G_{\delta,k}$. Then divide the interval $[0,T]$ for the 
subintervals of the length $\delta$, which we  call $I_{\delta,i}$. Moreover by $J_{\delta,i}$ we shall mean the intervals of the length $2\delta$, namely $((i-3/2)\delta,(i+1/2)\delta)$. 
  Hence 
%
\begin{equation}\begin{split}
\int_0^T\int_\Omega& M(t,x,|{\cal S}_\delta z(t,x)|)\,dx\\
&=\sum\limits_{i=1}^{[T/\delta]}\sum\limits_{k=1}^{N}
\int_{I_{\delta,i}\cap(0,T)}\int_{D_{\delta,k}\cap\Omega}M(t,x,|{\cal S}_\delta z(t,x)|)\,dx\,dt.
\end{split}\end{equation}
Define 
\begin{equation}
m_{i,k}^\delta(\xi):=\inf_{(t,x)\in (J_{\delta,i}\times G_{\delta,k})\cap Q}M(t,x,\xi)\le 
\inf_{(t,x)\in (I_{\delta,i}\times D_{\delta,k})\cap Q}M(t,x,\xi)
\end{equation}
and 
\begin{equation}
\alpha_{i,k}(t,x,\delta):=\frac{M(t,x,|{\cal S}_\delta z(t,x)|)}{m_{i,k}^\delta
(|{\cal S}_\delta z(t,x)|)}.
\end{equation}
Then obviously
\begin{equation}\begin{split}
\int_0^T\int_\Omega& M(t,x,|{\cal S}_\delta z(t,x)|)\,dx\,dt\\&=\sum\limits_{i=1}^{[T/\delta]}\sum\limits_{k= 1}^N
\int_{I_{\delta,i}\cap(0,T)}\int_{D_{\delta,k}\cap\Omega}\alpha_k(t,x,\delta)m_{i,k}^\delta
(|{\cal S}_\delta z(t,x)|) \,dx\,dt.
\end{split}\end{equation}
We shall now concentrate on the uniform estimates of $\alpha_{i,k}(t,x,\delta)$ for sufficiently small $\delta$ and 
$(t,x)\in I_{\delta,i}\times D_{\delta,k}$.  Without loss of generality one can assume that 
$\| z\|_M\le 1$. By H\"older inequality \eqref{hoelder} we obtain 
\begin{equation}\label{22}\begin{split}
&|{\cal S}_\delta z(t,x)|\\
&\le\frac{1}{\delta^{d+1}}\sup_{B(0,1)}|S(t,y)|\int_Q\left|\bbbone_{B(0,\delta)}(y) z(t-s,
\Psi^\delta(x)-y)\right| \,dy\\
&\le \frac{1}{\delta^{d+1}}\sup_{B(0,1)} |S(t,y)| \| z\|_{1}\le \frac{c}{\delta^{d+1}}\|z\|_M
\le \frac{c}{\delta^{d+1}}.
\end{split}\end{equation}


Let now $(t_i,x_k)$ be the point where the infimum of $M(t,x,\xi)$ is obtained in the set $J_{\delta,i}\times G_{\delta,k}$. Then by log-H\"older
regularity we have
\begin{equation}
\alpha_{i,k}(t,x,\delta)=\frac{M(t,x,| {\cal S}_\delta z(t,x)|)}{M(t_i,x_k,| {\cal S}_\delta z(t,x)|)}\le| {\cal S}_\delta z(t,x)| ^\frac{H}{\ln\frac{1}{|x-x_k|+|t-t_i|}}.
\end{equation}
And as $x\in D_{\delta,k}$ and $x_k\in G_{\delta,k}$ then   $|x-x_k|\le \delta\sqrt{d}$ and for 
$t\in I_{\delta,i}$ and $t_i\in J_{\delta,i}$ we have $|t-t_i|\le\delta$. Hence for sufficiently small $\delta$, e.g. $\delta<\frac{1}{2(\sqrt{d}+1)}$ we have
$$| {\cal S}_\delta z(t,x)| ^\frac{H}{\ln\frac{1}{|x-x_k|+|t-t_i|}}\le | {\cal S}_\delta z(t,x)|^\frac{H}{\ln\frac{1}{\delta(\sqrt{d}+1)}}.
$$ 
Further we use  \eqref{22} to estimate as follows again for $\delta<\frac{1}{2(\sqrt{d}+1)}$
\begin{equation}\begin{split}
 | {\cal S}_\delta z(t,x)|^\frac{H}{\ln\frac{1}{\delta(\sqrt{d}+1)}}&\le(c\delta^{-(d+1)})^\frac{H}{\ln\frac{1}
 {\delta(\sqrt{d}+1)}}\\
& \le c^\frac{H}{\ln2} \cdot(\sqrt{d}+1)^\frac{H(d+1)}{\ln 2}\cdot \left(e^{\ln \delta(\sqrt{d}+1)}\right)^\frac{(d+1)H}{\ln\delta(\sqrt{d}+1)}
\\&
 \le (\sqrt{d}+1)^\frac{H(d+1)}{\ln 2} \cdot c^\frac{H}{\ln2}\cdot e^{(d+1)H}:=C.
\end{split}\end{equation}
Consequently
\begin{equation}\label{osza}
\alpha_{i,k}(t,x,\delta)\le C.
\end{equation}
Define $\tilde M(t,x,\xi):=\max_{i,k}m_{i,k}^\delta(\xi)$ where the maximum is taken with respect to all the sets 
$J_{\delta,i}\times G_{\delta,k}$. One immediately observes that  $\tilde M(t,x,\xi)\le M(t,x,\xi)$ for all $(t,x)\in Q.$
Another observation concerns the behaviour of $\Psi^\delta$ on the sets $G_{\delta,k}$. Note that the mapping 
$\Psi^\delta$ only changes the shape of the sets which overlap with a neighbourhood of the boundary but does not change 
their number. 
Using the uniform estimate \eqref{osza} and Jensen inequality we have 
\begin{equation}\begin{split}
\int_Q &M(t,x,| {\cal S}_\delta z(t,x)|)dxdy\le C\sum\limits_{i=1}^{[T/\delta]}\sum\limits_{k=1}^N
\int_{I_{\delta,i}}\int_{D_{\delta,k}}m_{i,k}^\delta(| {\cal S}_\delta z(t,x)|) 
\,dx\,dt\\&
\le C\sum\limits_{i=1}^{[T/\delta]}\sum\limits_{k=1}^N\int_{B(0,\delta)}|S_\delta(y)|\,dy
\int_{J_{\delta,i}}\int_{\Psi^\delta(G_{\delta,k})}
m_{i,k}^\delta( | z(t,x)|)\,dx\,dt\\&
\le 2^{d+1}C\int_{Q}\tilde M(t,x,| z(t,x)|) \,dx\,dt
\le 2^{d+1}C\int_{Q} M(t,x,| z(t,x)|) \,dx\,dt
\end{split}\end{equation}
which completes the proof. 
%

\bigskip

The remaining part of this section contains some properties of sequences convergent in Musielak-Orlicz spaces. 

\end{proof}

\begin{lemma}\label{lem-dense}
Let ${\mathbb S}$ be the set of all simple, integrable functions on $Q$ and let $$\int_A M(t,x,|z|)\,dx\,dt<\infty$$
for every $z\in \R^d$ and measurable set $A$ of finite measure. Then ${\mathbb S}$ is dense with respect to the modular topology in $L_M(Q)$. 
\end{lemma}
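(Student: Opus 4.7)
The plan is to approximate any $z\in L_M(Q)$ by a two–step procedure: first truncate $z$ to obtain bounded functions, then approximate each bounded function by simple integrable functions, and finally extract a diagonal sequence. Throughout, the key analytic tool is the dominated convergence theorem applied to the integrand $M(t,x,\cdot)$, and the key structural input is the hypothesis $\int_A M(t,x,|z|)\,dx\,dt<\infty$ for finite–measure sets and constant values $z$, together with the fact that $Q=(0,T)\times\Omega$ itself has finite Lebesgue measure.

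Since $z\in L_M(Q)$ we have $\rho_M(\alpha z)\to 0$ as $\alpha\to 0$; fix $\lambda>0$ small enough so that $\rho_M(z/\lambda)<\infty$. Define the truncations $z_n:=z\,\bbbone_{\{|z|\le n\}}$. Then $|z-z_n|=|z|\bbbone_{\{|z|>n\}}\le|z|$, so $M(t,x,|z-z_n|/\lambda)\le M(t,x,|z|/\lambda)$, the latter being integrable. Since $|z|$ is finite a.e., the integrand tends to $0$ pointwise, and dominated convergence yields $\rho_M\!\left((z-z_n)/\lambda\right)\to 0$, i.e.\ $z_n\modular{M}z$.

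For the second step, each $z_n$ is bounded by $n$ and measurable on the finite–measure set $Q$. By the standard measure–theoretic construction there exist simple functions $s_{n,k}$ with $|s_{n,k}|\le n$ and $s_{n,k}\to z_n$ pointwise as $k\to\infty$; these $s_{n,k}$ are integrable because they are bounded and supported in the finite–measure set $Q$. Now $|z_n-s_{n,k}|\le 2n$, so by the hypothesis of the lemma
\begin{equation*}
M(t,x,|z_n-s_{n,k}|/\lambda)\le M(t,x,2n/\lambda)\in L^1(Q),
\end{equation*}
and the integrand tends to $0$ a.e. Dominated convergence again gives $\rho_M\!\left((z_n-s_{n,k})/\lambda\right)\to 0$ as $k\to\infty$.

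The proof concludes by a diagonal extraction: choose $k_n$ with $\rho_M\!\left((z_n-s_{n,k_n})/\lambda\right)<1/n$, and set $\tilde s_n:=s_{n,k_n}$. Using convexity of $M$ in its last argument,
\begin{equation*}
\rho_M\!\left(\frac{z-\tilde s_n}{2\lambda}\right)\le \tfrac{1}{2}\rho_M\!\left(\frac{z-z_n}{\lambda}\right)+\tfrac{1}{2}\rho_M\!\left(\frac{z_n-\tilde s_n}{\lambda}\right)\longrightarrow 0,
\end{equation*}
so $\tilde s_n\modular{M}z$ with parameter $2\lambda$, proving density of $\mathbb S$ in the modular topology. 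I expect no serious obstacle here; the only subtle point is that modular convergence is not a topology generated by a metric in general, so one must be slightly careful to use the \emph{same} $\lambda$ in both steps (which is why we freeze one $\lambda$ at the outset and then absorb the factor $2$ into the final parameter), rather than attempting to pass to the limit in two separate convergence relations simultaneously.
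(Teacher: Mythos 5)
Your proof is correct and matches the standard argument that the paper defers to Musielak (Theorem~7.6): truncate to obtain bounded functions, approximate each bounded function by simple integrable functions (the lemma's hypothesis $\int_A M(t,x,|z|)\,dx\,dt<\infty$ supplies the integrable majorant $M(t,x,2n/\lambda)$ needed for dominated convergence on the finite--measure set $Q$), and combine via convexity of $M$ with a single fixed scaling parameter. Your closing remark about freezing $\lambda$ from the outset and absorbing the factor $2$ correctly addresses the one subtlety of modular (as opposed to norm) convergence, so there is nothing to add.
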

For the proof  see \cite[Theorem 7.6]{Musielak}. 
\begin{lemma}\label{modular-conv}
Let $z^j:Q\to\R^d$ be a measurable sequence. Then
$z^j\modular{M} z$ in $L_M(Q)$ modularly if and only if
$z^j\to z$ in measure and there exist some $\lambda>0$ such that
the sequence $\{M(t,x,\lambda z^j)\}$ is uniformly integrable in $L^1(Q)$,
i.e.,
$$\lim\limits_{R\to\infty}\left(\sup\limits_{j\in\N}\int_{\{(t,x):|M(\lambda z^j)|\ge
R\}}M(t,x,\lambda| z^j|)dxdt\right)=0.$$
\end{lemma}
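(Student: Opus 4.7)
\textbf{Proof plan for Lemma \ref{modular-conv}.} The plan is to establish the two implications separately and rely on classical real analysis: Chebyshev-type estimates together with absolute continuity for the ``in measure'' part, and Vitali's convergence theorem for the modular part. Throughout I will use only the $N$-function properties from (A4) (notably $M(t,x,a)=0$ iff $a=0$ and monotonicity/convexity in $a$) together with the finite measure of $Q$.

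\emph{Forward direction.} Assume $\rho_M\bigl((z^j-z)/\lambda\bigr)\to0$ for some $\lambda>0$. To obtain convergence in measure, fix $\varepsilon>0$ and $\delta>0$ and set $A_j^\varepsilon=\{(t,x)\in Q:|z^j-z|>\varepsilon\}$. Because $M(t,x,\varepsilon/\lambda)>0$ for a.a.\ $(t,x)$, the sets $\{(t,x):M(t,x,\varepsilon/\lambda)<\eta\}$ shrink as $\eta\downarrow 0$ to a null set, so one can choose $\eta>0$ with $|\{M(\cdot,\varepsilon/\lambda)<\eta\}|<\delta/2$. On the complement, monotonicity of $M$ in the last argument and Chebyshev give
\[
|A_j^\varepsilon\cap\{M(\cdot,\varepsilon/\lambda)\ge\eta\}|\le\frac{1}{\eta}\int_Q M\!\left(t,x,\frac{|z^j-z|}{\lambda}\right)dx\,dt\longrightarrow 0,
\]
so $|A_j^\varepsilon|<\delta$ for large $j$. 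For uniform integrability, write $\mu z^j=\tfrac12(z^j-z)/\lambda+\tfrac12 z/\lambda$ with $\mu:=1/(2\lambda)$ and use convexity:
\[
M(t,x,\mu|z^j|)\le\tfrac12 M(t,x,|z^j-z|/\lambda)+\tfrac12 M(t,x,|z|/\lambda).
\]
The first term tends to $0$ in $L^1(Q)$ (hence is uniformly integrable), and the second is a fixed $L^1$ function since modular convergence forces $z\in L_M(Q)$ (Fatou applied along an a.e.\ convergent subsequence extracted from the in-measure convergence). The sum of uniformly integrable and dominated terms is uniformly integrable.

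\emph{Reverse direction.} Assume $z^j\to z$ in measure and $\{M(t,x,\lambda z^j)\}$ is uniformly integrable for some $\lambda>0$. Pass to a subsequence along which $z^j\to z$ a.e.; Fatou's lemma applied to $M(t,x,\lambda z^j)$ gives $z\in L_M(Q)$. Set $\tilde\lambda:=\lambda/2$ and apply convexity once more:
\[
M(t,x,\tilde\lambda|z^j-z|)\le\tfrac12 M(t,x,\lambda|z^j|)+\tfrac12 M(t,x,\lambda|z|).
\]
The right-hand side is a sum of uniformly integrable families and therefore so is the left-hand side. Since the integrand on the left converges to $0$ a.e.\ on the subsequence and $Q$ has finite measure, Vitali's convergence theorem yields $\rho_M\bigl((z^j-z)/(1/\tilde\lambda)^{-1}\bigr)\to 0$ along the subsequence. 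A standard contradiction argument (any subsequence of $\{z^j\}$ contains a further subsequence with a.e.\ convergence, hence with modular convergence by the same reasoning) promotes this to convergence of the whole sequence.

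\emph{Main obstacle.} The delicate point is the ``convergence in measure'' step in the forward direction: one cannot assume a uniform pointwise lower bound of the form $M(t,x,\varepsilon/\lambda)\ge c>0$ on $Q$, because the $N$-function depends measurably on $(t,x)$ and may degenerate on arbitrarily small sets. The fix is the two-step split above, using absolute continuity of measure to control the ``degenerate'' set and Chebyshev on its complement; all remaining ingredients are convexity estimates plus an appeal to Vitali, which is routine once $z\in L_M(Q)$ is established.
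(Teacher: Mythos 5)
The paper does not give an in-text proof of this lemma; it simply cites \cite{GwSw2008}, Lemma~2.1, so there is no internal argument to compare against. Your two-sided strategy (Chebyshev plus absolute continuity for the in-measure implication, convexity estimates feeding into Vitali for the modular implication, and the subsequence-of-subsequence step to promote from a.e.\ convergent subsequences to the full sequence) is the standard and correct route, and your handling of the degeneracy of $M(t,x,\cdot)$ on a small exceptional set in the Chebyshev step is exactly the right fix.

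There is, however, one technical slip in the forward direction. You set $\mu=1/(2\lambda)$ and write
$M(t,x,\mu|z^j|)\le\tfrac12 M(t,x,|z^j-z|/\lambda)+\tfrac12 M(t,x,|z|/\lambda)$,
claiming the second summand is a fixed $L^1$ function because $z\in L_M(Q)$. But $z\in L_M(Q)$ only guarantees $\rho_M(\alpha z)<\infty$ for \emph{some} sufficiently small $\alpha>0$, not for $\alpha=1/\lambda$, so $M(t,x,|z|/\lambda)$ need not be integrable (for instance, with $M(a)=e^a-a-1$ on $Q=(0,1)$ and $z(x)=-\lambda\ln x$, one has $\rho_M(z/\lambda)=\infty$ yet $z\in L_M$). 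Moreover, the parenthetical ``Fatou along an a.e.\ convergent subsequence'' cannot by itself deliver $\rho_M(z/\lambda)<\infty$: Fatou would need $\sup_j\rho_M(\lambda z^j)<\infty$, which is part of what you are trying to prove. The repair is easy and purely bookkeeping: fix $j_0$ with $\rho_M((z^{j_0}-z)/\lambda)<\infty$ and $\alpha_0>0$ with $\rho_M(\alpha_0 z^{j_0})<\infty$ (such $\alpha_0$ exists from the implicit membership $z^{j_0}\in L_M(Q)$); a convex combination then yields $\alpha_1>0$ with $\rho_M(\alpha_1 z)<\infty$, and choosing $\mu=\alpha_1/(\alpha_1\lambda+1)$ makes $\mu z^j$ a convex combination of $(z^j-z)/\lambda$ and $\alpha_1 z$, after which your argument runs unchanged. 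Aside from this, and a harmless typo $(1/\tilde\lambda)^{-1}$ where you mean $1/\tilde\lambda$, the proof is sound.
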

For the proof see \cite[Lemma 2.1]{GwSw2008}.
\begin{lemma}\label{uni-int}
Let $M$ be an  $N$--function and for all $j\in\N$ let  $$\int_Q
M(t,x,|z^j|)\,dx\,dt\le c.$$
Then the sequence $\{z^j\}$ is
uniformly integrable in $L^1(Q)$.
\end{lemma}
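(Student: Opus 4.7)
The plan is to exploit the superlinear growth of $M$ guaranteed by (A4), namely
$$\lim_{a\to\infty}\inf_{(t,x)\in Q}\frac{M(t,x,a)}{a}=\infty,$$
which plays exactly the role of the de la Vall\'ee Poussin condition.

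First I would fix $\varepsilon>0$ and any measurable $E\subset Q$, and split $|z^j|$ according to a threshold $R>0$ as
$$\int_E |z^j|\,dx\,dt=\int_{E\cap\{|z^j|\le R\}}|z^j|\,dx\,dt+\int_{E\cap\{|z^j|>R\}}|z^j|\,dx\,dt.$$
The first integral is trivially bounded by $R\,|E|$, so the task reduces to making the tail integral small uniformly in~$j$.

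Next, using the superlinearity above, for every $L>0$ I choose $R_L$ so large that $M(t,x,a)\ge L\,a$ for all $a\ge R_L$ and a.a.\ $(t,x)\in Q$. On the set $\{|z^j|>R_L\}$ this gives $|z^j|\le M(t,x,|z^j|)/L$, hence
$$\int_{\{|z^j|>R_L\}}|z^j|\,dx\,dt\le\frac{1}{L}\int_Q M(t,x,|z^j|)\,dx\,dt\le \frac{c}{L}.$$
Choosing $L=2c/\varepsilon$ fixes $R_L$, and then picking $\delta=\varepsilon/(2R_L)$ yields, for every measurable $E$ with $|E|<\delta$ and every $j$,
$$\int_E|z^j|\,dx\,dt\le R_L\,|E|+\frac{c}{L}\le\frac{\varepsilon}{2}+\frac{\varepsilon}{2}=\varepsilon.$$
That is precisely uniform integrability of $\{z^j\}$ in $L^1(Q)$.

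There is essentially no obstacle here: the one subtle point is that the superlinearity must be \emph{uniform} in $(t,x)$, which is exactly what the assumption $\lim_{a\to\infty}\inf_{(t,x)\in Q}M(t,x,a)/a=\infty$ in (A4) provides. Without this uniformity (i.e.\ only pointwise superlinearity) the argument above would fail, since the constant $R_L$ could blow up as one moves around $Q$. Given (A4), however, the proof is a direct two-line computation once the splitting trick is in place.
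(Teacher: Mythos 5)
Your argument is correct and is the standard de la Vall\'ee Poussin-type proof: the uniform superlinearity $\lim_{a\to\infty}\inf_{(t,x)\in Q}M(t,x,a)/a=\infty$ built into the definition of an $N$-function yields, for every $L$, a threshold $R_L$ with $|z^j|\le M(t,x,|z^j|)/L$ on $\{|z^j|>R_L\}$, and the uniform modular bound then controls the tails. The paper itself only cites an external source (Lemma~2.2 of the referenced work) rather than giving a proof, but the argument there is exactly this one, so your proposal matches the intended approach.
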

For the proof see \cite[Lemma 2.2]{GwSw2008}.
\begin{proposition}\label{product}
Let $M$ be an  $N$--function and $M^\ast$ its complementary
function. Suppose that the sequences $\psi^j:Q\to\R^d$ and
$\phi^j:Q\to\R^d$ are uniformly bounded in $L_M(Q)$ and
$L_{M^\ast}(Q)$ respectively. Moreover $\psi^j\modular{M}\psi$
modularly in $L_M(Q)$ and $\phi^j\modular{M^\ast}\phi$  modularly
in $L_{M^\ast}(Q)$. Then $\psi^j\cdot\phi^j\to\psi\cdot\phi$
strongly in $L^1(Q)$.
\end{proposition}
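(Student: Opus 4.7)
The plan is to apply Vitali's convergence theorem to $\{\psi^j\cdot\phi^j\}$, reducing the claim to (i)~convergence in measure of the products to $\psi\cdot\phi$ and (ii)~uniform integrability of $\{\psi^j\cdot\phi^j\}$ in $L^1(Q)$.

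For~(i), Lemma~\ref{modular-conv} yields convergence in measure $\psi^j\to\psi$ and $\phi^j\to\phi$. Along an arbitrary subsequence one can extract a further sub-subsequence on which both convergences hold almost everywhere, so $\psi^j\cdot\phi^j\to\psi\cdot\phi$ a.e.\ there, and the usual subsequence principle then delivers convergence in measure of the whole product sequence on $Q$.

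For~(ii), I would use the Fenchel--Young inequality~\eqref{F-Y} with a scaling parameter $\sigma>0$,
\[
|\psi^j\cdot\phi^j|\le M\bigl(t,x,\sigma|\psi^j|\bigr)+M^*\bigl(t,x,|\phi^j|/\sigma\bigr),
\]
and dominate each summand by a uniformly integrable majorant. By Lemma~\ref{modular-conv} there exist $\lambda,\mu>0$ for which $\{M(t,x,\lambda\psi^j)\}$ and $\{M^*(t,x,\mu\phi^j)\}$ are uniformly integrable in $L^1(Q)$; monotonicity of $M$ and $M^*$ in the last slot propagates uniform integrability to any smaller positive parameter. Choosing $\sigma\le\lambda$ and $1/\sigma\le\mu$ makes both Fenchel--Young summands uniformly integrable, hence so is their majorant $\{\psi^j\cdot\phi^j\}$, and Vitali's theorem then produces the strong $L^1(Q)$ convergence.

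The delicate step is securing a single $\sigma$ that simultaneously meets $\sigma\le\lambda$ and $\sigma\ge 1/\mu$. I would sharpen the admissible ranges by means of the convexity bound $M(a+b)\le\tfrac12 M(2a)+\tfrac12 M(2b)$ applied to the splitting $\psi^j=(\psi^j-\psi)+\psi$: since $\int_Q M((\psi^j-\psi)/\lambda_\psi)\,dx\,dt\to 0$ provides uniform integrability of the difference part and $M(\cdot,\cdot,2\psi/\lambda_\psi)\in L^1(Q)$ handles the fixed part (using that $\psi\in L_M(Q)$ by Fatou applied to the almost-everywhere convergent subsequence), one gets uniform integrability of $\{M(t,x,\sigma\psi^j)\}$ for $\sigma$ in a quantitative range controlled by $\lambda_\psi$ and $\|\psi\|_M$. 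Repeating this argument on the $\phi$-side via the decomposition $\phi^j=(\phi^j-\phi)+\phi$ and intersecting the admissible scaling ranges produces a valid~$\sigma$; verifying that this intersection is nonempty under the given modular-convergence data is the main technical hurdle of the proof.
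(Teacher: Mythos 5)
The paper does not prove this proposition inline; it cites \cite[Proposition~2.2]{GwSw2008}, so there is no in-paper argument to compare against. Your plan --- Vitali's theorem, with convergence in measure of the products via Lemma~\ref{modular-conv} and uniform integrability via Fenchel--Young --- is the natural route and your step~(i) is fine.

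The ``delicate step'' you flag at the end, however, is an artifact of your parametrization and does not actually arise. By tying the two scalings together as $\sigma$ and $1/\sigma$ so as to recover $|\psi^j\cdot\phi^j|$ with constant one on the left, you impose the simultaneous requirements $\sigma\le\lambda$ and $\sigma\ge 1/\mu$, which need not be compatible. But uniform integrability of a sequence is unaffected by multiplication with a fixed positive constant, so there is no need to normalize. Use instead two \emph{independent} parameters $\sigma,\tau>0$ in Fenchel--Young~\eqref{F-Y}:
\begin{equation*}
\sigma\tau\,|\psi^j\cdot\phi^j| \;=\; |\sigma\psi^j\cdot\tau\phi^j| \;\le\; M\bigl(t,x,\sigma|\psi^j|\bigr)+M^*\bigl(t,x,\tau|\phi^j|\bigr).
\end{equation*}
Lemma~\ref{modular-conv} supplies $\lambda,\mu>0$ for which $\{M(t,x,\lambda\psi^j)\}_j$ and $\{M^*(t,x,\mu\phi^j)\}_j$ are uniformly integrable, and by monotonicity of $M$, $M^*$ in the last argument every $\sigma\in(0,\lambda]$ and $\tau\in(0,\mu]$ also works --- chosen independently. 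Then $\{\sigma\tau\,\psi^j\cdot\phi^j\}_j$ is dominated by a uniformly integrable family, hence uniformly integrable, hence so is $\{\psi^j\cdot\phi^j\}_j$. Together with convergence in measure and $|Q|<\infty$ (so Vitali on a finite-measure space needs no tightness hypothesis), this yields strong $L^1(Q)$ convergence. The elaborate decomposition $\psi^j=(\psi^j-\psi)+\psi$ that you invoke to force the two ranges to overlap is therefore unnecessary, and you should drop it; with independent $\sigma,\tau$ the proof closes cleanly.
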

For the proof see \cite[Proposition 2.2]{GwSw2008}.

\bigskip

The next  lemma  is the main tool for showing that  the limits of appro\-xi\-ma\-te sequences are in the graph ${\cal A}$ provided that the graph is maximal monotone. This lemma in such a form was formulated in 
\cite{BuGwMaRaSw2012}. For completeness we provide here its proof in the appendix. See also \cite{Wr2010} for the single-valued case. 

\begin{lemma}\label{Minty2}
Let $\mathcal{A}$ be maximal monotone $M$-graph.
 Assume that there are
sequences $\{A^n\}_{n=1}^\infty$ and
$\{\nabla u^n\}_{n=1}^{\infty}$ defined on $Q$ such that 
the following conditions hold:
\begin{align}
(\nabla u^n,A^n)& \in \mathcal{A} &&\textrm{ a.e. in } Q,\\\label{1.26}
\nabla u^n &\weakstar \nabla u &&\textrm{ weakly-star in } L_M(Q),\\
A^n &\weakstar A &&\textrm{ weakly-star in }  L_{M^*}(Q),\label{1.27}\\
\limsup_{n\to \infty} \int_{Q}A^n \cdot \nabla u^n \,d x\,d t &\le \int_{Q} A \cdot \nabla u \, d z.\label{Ass}
\end{align}
Then 
\begin{equation}
(\nabla u(t,x),A(t,x))\in \mathcal{A}(t,x)\quad \textrm{ a.e. in } Q.\label{Minty2-2}
\end{equation}
\end{lemma}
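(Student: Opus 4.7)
The strategy is the Minty--Browder monotonicity trick, adapted to the Musielak--Orlicz setting without~$\Delta_2$. The delicate point is that the weak-star convergences \eqref{1.26}--\eqref{1.27} take place in the predual topologies $\sigma(L_M,E_{M^*})$ and $\sigma(L_{M^*},E_M)$, so one may pass to the limit in cross terms only when the other factor lives in the smaller space $E_M$ or $E_{M^*}$. This forces a truncation before taking $n\to\infty$.

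Use (A5) (we describe the first alternative; the second is symmetric) to fix a measurable selection $\tilde A:Q\times\R^d\to\R^d$; redefining on the measurable set $\{\xi=0\}$ and using (A1), we may assume $\tilde A(t,x,0)=0$ a.e. Given a bounded measurable $\eta:Q\to\R^d$, set $B:=\tilde A(\cdot,\cdot,\eta)$. From (A4) applied to the pair $(\eta,B)$, the boundedness of $\eta$ and the superlinear growth of $M^*$, one deduces $B\in L_{M^*}(Q)$. For $R>0$ truncate: $\eta_R:=\eta\,\bbbone_{\{|B|\le R\}}$ and $B_R:=B\,\bbbone_{\{|B|\le R\}}$. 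Since $(0,0)\in\mathcal A$ by (A1), the pair $(\eta_R,B_R)$ still lies in $\mathcal A(t,x)$ a.e., and both factors are bounded, hence in $E_M(Q)$ and $E_{M^*}(Q)$ respectively.

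Apply monotonicity (A2) to $(\nabla u^n,A^n)$ and $(\eta_R,B_R)$, integrate over $Q$, and let $n\to\infty$. Hypothesis \eqref{Ass} takes care of the term $\int_Q A^n\cdot\nabla u^n$; the weak-star convergences \eqref{1.26}--\eqref{1.27} combined with $\eta_R\in E_M$, $B_R\in E_{M^*}$ dispose of the cross terms. One obtains
\begin{equation*}
\int_Q (A-B_R)\cdot(\nabla u-\eta_R)\,\ud x\,\ud t\ge 0.
\end{equation*}
Now send $R\to\infty$. Pointwise $\eta_R\to\eta$ and $B_R\to B$ with dominations $\|\eta\|_\infty$ and $|B|$, hence $\eta_R\modular{M}\eta$ and $B_R\modular{M^*}B$. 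Dominated convergence (for the two terms involving only one of the sequences) together with Proposition~\ref{product} (for the mixed term $\int B_R\cdot\eta_R$) upgrades the inequality to
\begin{equation*}
\int_Q (A-\tilde A(\cdot,\cdot,\eta))\cdot(\nabla u-\eta)\,\ud x\,\ud t\ge 0 \qquad \text{for every }\eta\in L^\infty(Q;\R^d).
\end{equation*}

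To pass from this integrated Minty inequality to the pointwise conclusion we localize. Fix $\xi\in\mathbb Q^d$, a measurable $E\subset Q$ and $R>0$; choose the test function $\eta:=\xi\,\bbbone_E+\nabla u\,\bbbone_{E^c\cap\{|\nabla u|\le R\}}$, which is bounded by $\max(|\xi|,R)$. Because $\tilde A(\cdot,\cdot,0)=0$, the integrand above vanishes on $E^c\cap\{|\nabla u|\le R\}$, equals $(A-\tilde A(\cdot,\cdot,\xi))\cdot(\nabla u-\xi)$ on $E$, and equals $A\cdot\nabla u$ on $E^c\cap\{|\nabla u|>R\}$. Since $A\cdot\nabla u\in L^1(Q)$ by H\"older \eqref{hoelder}, the last contribution vanishes as $R\to\infty$, leaving $\int_E(A-\tilde A(\cdot,\cdot,\xi))\cdot(\nabla u-\xi)\,\ud x\,\ud t\ge 0$ for every measurable $E$. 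Hence the integrand is a.e.\ nonnegative for each $\xi\in\mathbb Q^d$. A countable intersection, together with closedness of the maximal monotone graph $\mathcal A(t,x)$ in finite dimensions, extends the inequality to all $(\xi_1,A_1)\in\mathcal A(t,x)$, and the maximality axiom (A3) delivers \eqref{Minty2-2}. The principal obstacle is this truncation step: because we lack $\Delta_2$, the function $B=\tilde A(\cdot,\cdot,\eta)$ lies only in $L_{M^*}$, not in $E_{M^*}$, so one has to reduce first to the bounded $B_R$ (legitimate in duality) and only then recover $B$ through the modular topology of Proposition~\ref{product}.
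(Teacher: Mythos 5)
Your proof is correct and reaches the same conclusion, but the route is genuinely different from the paper's. The paper first restricts to the sublevel set $Q_{(K)}:=\{(t,x):|k(t,x)|\le K\}$ to force the selection $\tilde A(\cdot,\cdot,B)$ to be pointwise bounded when $B\in L^\infty(Q)$; after passing $n\to\infty$ on $Q_{(K)}$ it inserts the two-parameter truncation $B=\nabla u\,\bbbone_{Q_i}+hW\bbbone_{Q_j}$ with $Q_j=\{z\in Q_{(K)}:|\nabla u(z)|\le j\}$ and $0<j<i$, sends $i\to\infty$, then $h\to 0_+$, identifies a weak $L^2(Q_j)$ limit $\bar A$ of $\tilde A(\cdot,\cdot,\nabla u+hW)$, uses (A3$^*$) to place $(\nabla u,\bar A)$ in $\mA$, and only afterwards proves $\bar A=A$ through a judicious choice of $W$. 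You instead truncate the \emph{range} of the selection ($\eta_R,B_R$), stay inside the graph thanks to (A1), pass $n\to\infty$ in the predual pairings, then $R\to\infty$ by dominated convergence, and localize directly with $\eta=\xi\bbbone_E+\nabla u\,\bbbone_{E^c\cap\{|\nabla u|\le R\}}$ to obtain the pointwise inequality for each $\xi\in\mathbb{Q}^d$. Your route bypasses the entire $h\to 0_+$ limit, the auxiliary weak limit $\bar A$, and the final identification $\bar A=A$; what it buys is a shorter and more transparent derivation of the pointwise Minty inequality, at the cost of an extra but elementary $R\to\infty$ passage. Two small remarks: the integrand vanishes on $E^c\cap\{|\nabla u|\le R\}$ simply because $\nabla u-\eta=0$ there, not because $\tilde A(\cdot,\cdot,0)=0$ (that normalization is only needed on $E^c\cap\{|\nabla u|>R\}$); and the final step from the pointwise inequality for rational $\xi$ to \eqref{Minty2-2} should invoke the equivalence (i)$\Leftrightarrow$(ii) of Lemma~\ref{LS*} with $U=\mathbb{Q}^d$, not merely ``closedness of the graph.'' Since $\tilde A(t,x,\cdot)$ need not be continuous and $\mA(t,x)$ may be multi-valued, closedness by itself only yields the inequality against \emph{some} limit point $\hat A_1\in\mA(t,x)(\xi_1)$ rather than against every $(\xi_1,A_1)\in\mA(t,x)$; it is precisely Lemma~\ref{LS*}, the same tool the paper uses through (A3$^*$), that converts ``tested against a measurable selection on a dense set of arguments'' into membership in the graph.
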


\section{Approximation theorem}\label{closures}
The current section is devoted to the issue of approximating functions having zero trace on the boundary and gradients bounded in Musielak-Orlicz space by compactly supported smooth functions in modular topology. This will be a crucial fact in the existence proof, in particular showing the energy equality, which is necessary for the limit passage in nonlinear term. This kind of approximation theorem in case of classical Orlicz spaces was proved in~\cite{ElmahiMeskine}. Before formulating the theorem let us 
define the space $V_M$ as follows
\begin{equation}
V_{M^*}(Q)=\{\phi=\div \phi_i: \phi_i\in L_{M^*}(Q)\}.
\end{equation}

\begin{theorem}\label{Aproksymacja}
If $u\in  L^2(Q)$, $\nabla u\in L_M(Q)$ 
and $u_t\in V_{M^*}(Q)+L^2(Q)$, 
then there exists a sequence $v^\delta \in{\cal C}_c^\infty([0,T]\times\Omega))$
satisfying 
\begin{equation}\label{zb_w_M1}
\nabla v^\delta\modular{M} \nabla u\ \mbox{ modularly in}\  L_M(Q) 
\ \mbox{and}\  v^\delta\to u \ \mbox{strongly in }\ 
L^2(Q).
\end{equation}
 Moreover we   can write
\begin{equation}
\frac{\partial v^\delta}{\partial t}=\div v_A^\delta+v_f^\delta\quad\mbox{and}\quad
\frac{\partial u}{\partial t} =\div v_A+v_f
\end{equation}
with 
\begin{equation}\label{zb_w_M*1}
 v^\delta_A\modular{M^*}  v_A \ \mbox{ modularly in }\ L_{M^*}(Q)  
 \quad\mbox{and}\quad v^\delta_f\to v_f \mbox{ strongly in } L^2(Q).
 \end{equation}

\end{theorem}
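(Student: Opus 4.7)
The plan is to set $v^\delta := \mathcal{S}_\delta u$, with $\mathcal{S}_\delta$ as in Lemma~\ref{modular-topology}, after first extending $u$ suitably outside $Q$: by zero across $\partial\Omega$ (using the zero trace that is implicit in having $\nabla u \in L_M(Q)$ in this context) and by a smooth cut-off or reflection in time near $t=0$ and $t=T$. Smoothness of $v^\delta$ is inherited from $S_\delta$, while the $K_1\delta$-offset guaranteed by Proposition~\ref{psi-delta}(i) ensures that $\operatorname{supp} v^\delta$ stays at distance $\gtrsim \delta$ from $\partial\Omega$, so $v^\delta \in \mathcal{C}^\infty_c([0,T]\times\Omega)$ as required.

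For the gradient, the chain rule together with integration by parts in $y$ (legitimate because of the zero-trace extension) yields
\[
\nabla v^\delta(t,x) = [\nabla\Psi^\delta(x)]^T \, \mathcal{S}_\delta(\nabla u)(t,x).
\]
I would then split
\[
\nabla v^\delta - \nabla u = \bigl([\nabla\Psi^\delta]^T - \mathbf{1}\bigr)\mathcal{S}_\delta(\nabla u) + \bigl(\mathcal{S}_\delta(\nabla u) - \nabla u\bigr).
\]
The first summand is handled by Proposition~\ref{psi-delta}(iii) combined with the uniform bound~\eqref{cont2}. For the second, I would first establish convergence in measure: approximate $\nabla u$ in $L^1(Q)$ by continuous compactly supported $\phi_n$, for which $\mathcal{S}_\delta \phi_n \to \phi_n$ uniformly (using the bound $|\Psi^\delta(x)-x|\le K_2\delta$ from Proposition~\ref{psi-delta}(ii) and uniform continuity of $\phi_n$); then use $L^1$-boundedness of $\mathcal{S}_\delta$ and density to conclude $\mathcal{S}_\delta(\nabla u) \to \nabla u$ in $L^1$, hence in measure. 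Uniform integrability of $\{M(\cdot,\cdot,\lambda\,\mathcal{S}_\delta(\nabla u))\}_\delta$ for some $\lambda>0$ follows directly from \eqref{cont2}. Lemma~\ref{modular-conv} then gives the desired modular convergence, and the strong $L^2$-convergence $v^\delta \to u$ is obtained by the same density argument in $L^2$ instead of $L^M$.

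For the time-derivative decomposition I would write $u_t = \div v_A + v_f$, exploit $\partial_t S_\delta(t-s,\cdot) = -\partial_s S_\delta(t-s,\cdot)$, and integrate by parts in $s$ (with no boundary contribution thanks to the time extension) to obtain
\[
\partial_t v^\delta(t,x) = \int_Q S_\delta(t-s,\Psi^\delta(x)-y)\bigl(\div_y v_A(s,y) + v_f(s,y)\bigr)\,dy\,ds.
\]
Integrating by parts in $y$ and using the identity $\nabla_z S_\delta = [\nabla\Psi^\delta]^{-T}\nabla_x S_\delta$ together with the product rule, one rearranges the $v_A$-term into $\div_x v_A^\delta$ up to a lower-order remainder of size $O(|\nabla\Psi^\delta - \mathbf{1}|)$; this remainder is absorbed into $v_f^\delta := \mathcal{S}_\delta v_f + (\text{error})$, which still converges to $v_f$ in $L^2(Q)$. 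Modular convergence $v_A^\delta \modular{M^*} v_A$ is then proved by repeating the argument of the previous paragraph, now invoking Lemma~\ref{modular-topology} with $M^*$ in place of $M$ (permissible because (M1) is symmetric in $M$ and $M^*$).

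The main obstacle is the non-commutation of $\mathcal{S}_\delta$ with differentiation, which is forced by the spatial shift $\Psi^\delta$ that one cannot avoid if zero trace is to be preserved. The spurious factors of $\nabla\Psi^\delta$ must be controlled uniformly in the modular topology, and the time-derivative piece is more delicate still: one must push the distributional identity $u_t = \div v_A + v_f$ through the convolution in a way that preserves both the divergence structure and modular convergence. The key analytical tool for all of this is Lemma~\ref{modular-topology}, which is precisely what converts the hypothesis $\nabla u \in L_M(Q)$, $v_A \in L_{M^*}(Q)$ into the uniform integrability needed for Lemma~\ref{modular-conv}.
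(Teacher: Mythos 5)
Your setup coincides with the paper's: you use the same mollification $v^\delta = \mathcal{S}_\delta u$ built on $\Psi^\delta$, the same decomposition $\nabla v^\delta - \nabla u = ([\nabla\Psi^\delta]^T - \mathbf{1})\mathcal{S}_\delta(\nabla u) + (\mathcal{S}_\delta(\nabla u) - \nabla u)$ with Proposition~\ref{psi-delta}(iii) plus \eqref{cont2} controlling the commutator, and the same observation that (M1) holds for $M^*$ to handle the term $v_A$. The place where you diverge is the proof that $\mathcal{S}_\delta(\nabla u) \modular{M} \nabla u$: the paper does \emph{not} invoke the characterization of modular convergence via Lemma~\ref{modular-conv}. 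Instead it takes a sequence of \emph{simple} functions $\xi_n \modular{M} \nabla u$ from Lemma~\ref{lem-dense}, shows $\rho_M(\mathcal{S}_\delta\xi_n - \xi_n)\to 0$ by dominated convergence (the dominant $M(t,x,\sum_j|\alpha^n_j|)$ being integrable by \eqref{int}), and then combines the three pieces by convexity of $\rho_M$ with a fixed, explicit $\lambda\ge\lambda_1+2\lambda_0+1$, letting $\delta\to 0$ first and $n\to\infty$ second.

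The specific gap in your version is the claim that uniform integrability of $\{M(\cdot,\cdot,\lambda\,\mathcal{S}_\delta(\nabla u))\}_\delta$ ``follows directly from \eqref{cont2}.'' What \eqref{cont2} gives is the $L^1$-bound $\int_Q M(t,x,\lambda|\mathcal{S}_\delta\nabla u|)\le c\int_Q M(t,x,\lambda|\nabla u|)$ uniformly in $\delta$, and boundedness in $L^1$ is strictly weaker than uniform integrability in the sense required by Lemma~\ref{modular-conv}. The de la Vall\'ee Poussin-type bootstrap you would need, e.g.\ controlling $\int_Q\Phi(M(t,x,\lambda|\mathcal{S}_\delta\nabla u|))$ for a superlinear $\Phi$, would require $M(t,x,2a)/M(t,x,a)\to\infty$ uniformly, which is not assumed and fails already for $M=a^p$. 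To close the gap you would have to split $\nabla u$ into a bounded simple part (whose $\mathcal{S}_\delta$-image is uniformly bounded and hence trivially uniformly integrable by \eqref{int}) and a small remainder estimated via \eqref{cont2}; at that point you have reconstructed the paper's simple-function argument and there is no longer any saving in appealing to Lemma~\ref{modular-conv}. The rest of your outline, including the convergence-in-measure argument via continuous approximation and the commutator bookkeeping for $\partial_t v^\delta$, is sound in spirit and parallel to the paper's.
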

\begin{proof}
Let us  
 define 
\begin{equation}\label{Sdelta2}
\begin{split}
 {\cal S}_\delta  u(t,x):=\int_Q
  S_\delta(s,y) u\left(t-s,\Psi^\delta(x)-y\right)  \,dy\,ds.
\end{split}\end{equation}
We will concentrate on showing that 
\begin{equation}\label{goal}
\nabla  {\cal S}_\delta ( u)\modular{M} \nabla u \quad
{\rm modularly\  in\ } L_M(Q)
\end{equation}
as $\delta\to0_+$.  
Consider the sequence of  simple functions
$\xi_n:=\sum_{j=1}^n \alpha_j^n \bbbone_{G_j}(t,x)$, where $\bigcup_{j\in\{1,\ldots,n\}} G_j=Q$, which converges to $ \nabla u $ modularly in $L_M(Q)$. 
The existence of such sequence is provided by Lemma \ref{lem-dense}.
Note that the sequence  $\xi_n$ does not have to be in a gradient form.  Let 
$B_\delta:=\{(s,y)\in Q\ :\ |s|+|y|<\delta\}$.
Then 
\begin{equation}
\begin{split}
&{\cal S}_\delta \xi_n(t,x)-\xi_n\\
=&\int_{B_\delta}  S_\delta(s,y)\sum\limits_{j=1}^n\left(\alpha_j^n\bbbone_{G_j}
(t-s, \Psi^\delta(x)-y)-
\alpha_j^n\bbbone_{G_j}(t, x)\right) \,ds\,dy.
\end{split}\end{equation}
Hence with help of the Jensen inequality and Fubini theorem we conclude 
\begin{equation}\label{proste}\begin{split}
&\rho_M\left( {\cal S}_\delta \xi_n(t,x)-\xi_n\right)\\
&=\int_Q M(t,x,|\int_{B_1}  S(s,y)\sum\limits_{j=1}^n(\alpha_j^n\bbbone_{G_j}
(t-\delta s, \Psi^\delta(x)-\delta y)\\&\qquad-
\alpha_j^n\bbbone_{G_j}(t, x)) \,ds\,dy|)\,dt\,dx\\
&\le \int _{B_1}  S(s,y) \int_Q  M(t,x, |\sum\limits_{j=1}^n\alpha_j^n(\bbbone_{G_j}
(t-\delta s, \Psi^\delta(x)-\delta y)\\&\qquad-
\bbbone_{G_j}(t, x))|) \,dt\,dx \,ds\,dy.
\end{split}\end{equation}
Observe that  $\ \sum\limits_{j=1}^n\alpha_j^n\left(\bbbone_{G_j}
(t-\delta s, \Psi^\delta(x)-\delta y)-
\bbbone_{G_j}(t, x)\right) \,dt\,dx) \}_{\delta>0}$ converges a.e. in $Q$ to zero as $\delta\to0_+$ and 
\begin{equation}\begin{split}\label{estimate2}
M(t,x,|\sum\limits_{j=1}^n\alpha_j^n\left(\bbbone_{G_j}
(t-\delta s,\Psi^\delta(x)-\delta y)-
\bbbone_{G_j}(t, x)|\right)\\
\le \sup\limits_{z\in\{-1,0,1\}}M(t,x, |\sum\limits_{j=1}^n\alpha_j^n z|)
\le M(t,x, \sum\limits_{j=1}^n|\alpha_j^n |)
\end{split}\end{equation}
holds.
By \eqref{int} the right-hand side of \eqref{estimate2} is integrable and hence with help of the Lebesgue dominated convergence theorem we conclude that 
\begin{equation}\label{zb-prostych}
\lim\limits_{\delta\to0_+}\rho_M\left( {\cal S}_\delta \xi_n(t,x)-\xi_n\right)=0.
\end{equation}
According to ~Lemma~\ref{lem-dense} there exists $\lambda_0>0$ such that  
\begin{equation}\label{dense}
\lim\limits_{n\to\infty}\rho_M\left(\frac{\xi_n-\nabla u}{\lambda_0}\right)=0.
\end{equation}
Before  concluding \eqref{goal} we estimate the following integral with help of H\"older inequality and 
using the convexity of $M$. Note that by Proposition~\ref{psi-delta} $(iii)$ we can choose $\lambda_1$ such that 
 the term   
$\|\nabla \Psi^\delta(x)-1\|_\infty$ is less than $\lambda_1$. Then
\begin{equation}
\begin{split}
&\rho_M\left(\frac{  \nabla {\cal S}_\delta (u)-{\cal S}_\delta(\nabla u)}{\lambda_1}\right)\\
&=\int_QM\left(t,x,\frac{1}{\lambda_1}| \int_Q S_\delta(s,y)\nabla u(t-s,\Psi^\delta(x)-y)(\nabla \Psi^\delta(x)-1)\,ds\,dy|
\right)\,dx\\
&\le \int_QM\left(t,x, \frac{\|\nabla \Psi^\delta(x)-1\|_\infty}{\lambda_1}\int_Q 
|S_\delta(s,y)\nabla u(t-s,\Psi^\delta(x)-y)|\,ds\,dy
\right)\,dx\\
&\le\frac{ \|\nabla \Psi^\delta(x)-1\|_\infty}{\lambda_1} \int_QM\left(t,x,\int_Q 
|S_\delta(s,y)\nabla u(t-s,\Psi^\delta(x)-y)|\,ds\,dy
\right)\,dx.
\end{split}
\end{equation}
The modular on the right hand side is uniformly bounded by Lemma~\ref{modular-topology}. Hence using 
Proposition~\ref{psi-delta} $(iii)$ we conclude that 
\begin{equation}\label{komutator}
\lim\limits_{\delta\to0_+}\rho_M\left(\frac{  \nabla {\cal S}_\delta (u)-{\cal S}_\delta(\nabla u)}{\lambda_1}\right)=0.
\end{equation}
From the convexity of the modular and  \eqref{cont2}, choosing $\lambda>0$ such that $\lambda\ge \lambda_1+2\lambda_0+1
$
we have

\begin{equation}
\begin{split}
\rho_M&\left(\frac{  \nabla {\cal S}_\delta (u)-\nabla u}{\lambda}\right)\\
&\le \frac{\lambda_0}{\lambda}\left[
\rho_M\left(\frac{ {\cal S}_\delta  (\nabla u)- {\cal S}_\delta( \xi_n)}{\lambda_0}\right)
+
\rho_M\left(\frac{\nabla u-\xi_n}{\lambda_0}\right)\right]\\
&\quad+\frac{1}{\lambda}\rho_M\left( {\cal S}_\delta( \xi_n)-\xi_n\right)
+\frac{\lambda_1}{\lambda}\rho_M\left(\frac{  \nabla {\cal S}_\delta (u)-{\cal S}_\delta(\nabla u)}{\lambda_1}\right)\\
&\le \frac{\lambda_0(1+c)}{\lambda}
\rho_M\left(\frac{\nabla u- \xi_n}{\lambda_0}\right)
+\frac{1}{\lambda}\rho_M\left( {\cal S}_\delta (\xi_n)-\xi_n\right)\\
&\quad+\frac{\lambda_1}{\lambda}\rho_M\left(\frac{  \nabla {\cal S}_\delta (u)-{\cal S}_\delta(\nabla u)}{\lambda_1}\right).
\end{split}\end{equation}
By \eqref{zb-prostych} and \eqref{dense}, passing first with $\delta\to0_+$ and then with 
$n\to \infty$  we conclude that $\lim\limits_{\delta\to0_+}\rho_M\left(\frac{ \nabla{\cal S}_\delta
( u)-\nabla u}{\lambda}\right)=0.$

In the second step we shall show that for this approximation the conditions on the time derivative are valid.  Let us 
write $\frac{\partial u}{\partial t} =\div v_A+v_f$ with $v_A\in L_{M^*}(Q)$ and $v_f\in L^\infty(Q)$. We will show there exists 
$\lambda_2$ and sequences $v_A^\delta, v_f^\delta$ such that 
\begin{equation}
\lim\limits_{\delta\to0_+}\rho_{M_*}\left(\frac{v_A^\delta-v_A}{\lambda_2}\right)=0
\end{equation}
and 
\begin{equation}
v_f^\delta\to v_f\quad \mbox{strongly in}\ L^2(Q).
\end{equation}
Observe that 
\begin{equation}
\begin{split}
\frac{\partial {\cal S}_\delta(u)}{\partial t}&=
{\cal S}_\delta \frac{\partial u}{\partial t}
=
{\cal S}_\delta(\div v_A+v_f)\\&={\cal S}_\delta \div(v_A)
-{\cal S}_\delta\nabla v_A+{\cal S}_\delta v_f\\&=\div({\cal S}_\delta(v_A))
+{\cal S}_\delta v_f-{\cal S}_\delta\nabla v_A
\end{split}\end{equation}
and repeating the same procedure as above now for an $N-$function $M^*$ we conclude that 
\begin{eqnarray}
{\cal S}_\delta(v_A)\modular{M^*} v_A\quad&\mbox{modularly in } \ L_{M^*}(Q),\\
{\cal S}_\delta  v_f\to v_f\quad &\mbox{strongly in }\ L^2(Q).
\end{eqnarray}
The last convergence is strong since the $N-$function $M(t,x,\xi)=|\xi|^2$ satisfies $\Delta_2-$condition and hence modular and strong topologies coincide. 
\qed\\
\end{proof}

\section{Proof of Theorem \ref{main2}. }\label{4}
We shall provide an approximation in two steps. First, from the multivalued function we choose a selection, which is then mollified. Further  the finite-dimensional problem is formulated by means of Galerkin method. 

Consider a selection of $\mA$, namely
$\tilde A:Q\times \R^{d}\to\R^{d}$   assigning to each  $\bB\in\R^{d}$
exactly one value $\tilde A(t,x,\bB)\in \R^{d}$ so that $(\bB,\tilde A(t,x,\bB))\in\mA$.
With help of this selection we approximate $A$ as follows:
\begin{equation}\label{Te}
 A^\varepsilon (t,x,\xi):=(\tilde A* K^\varepsilon )(t,x,\xi)=\int_{\R^d}\tilde A(t,x,\zeta)
 K^\varepsilon (\xi-\zeta)\, d\zeta,
\end{equation}
where $ K^\varepsilon (\xi)=\frac{1}{\varepsilon} K\left(\frac{\xi}{\varepsilon } \right),\,\varepsilon>0$ and  
$ K\in {\cal C}^\infty_c(\R^d)$ is  a mollification kernel, i.e., a radially symmetric function with support in a unit ball $B(0,1)\subset\R^d$  and $\int_{\R^d} K \, d \xi=1$. It is not difficult to observe, using the convexity of $M$ and $M^*$ and by means of the Jensen inequality, that the approximation $ A^\varepsilon$ satisfies a condition analogous to  (A4), namely
\begin{itemize}
\item[{ (A4)$_\varepsilon$}]   There are non-negative $k\in
L^1(Q)$,   $c_*>0$ and an $N$-function $M$ such that
\begin{equation*} 
A^\vep \cdot \nabla u \geq -k(t,x) +c_*(M(t,x,\nabla u) + M^*(t,x,A^\vep)). 
\end{equation*}

\end{itemize}

%

The finite dimensional approximate problem  is constructed by means of Galerkin method. 
The basis consisting of eigenvectors of the Laplace operator  is chosen and  by $u^{ \varepsilon,n}$ we mean the solution to the considered problem projected to $n$ vectors of the chosen basis, namely 
$u^{ \varepsilon,n}(t,x):=\sum_{i=1}^nc_i^{ \varepsilon,n}(t)\omega_i(x)$ which solves the following system
\begin{equation}\label{aproksymacja}
\begin{split}
(u^{ \varepsilon,n}_t, \omega_i)+(A^\varepsilon(t,x,\nabla u^{ \varepsilon,n}),\nabla \omega_i)
=\langle f,\omega_i\rangle, \qquad i=1,\ldots,n, \\
u^{ \varepsilon,n}(0)=P^nu_0
\end{split}\end{equation}
where $P^n$ denotes the orthogonal projection of $L^2(\Omega)$ on the ${\rm span}\,\{\omega_1,
\ldots,\omega_n\}.$
 Let $Q^s:=(0,s)\times \Omega$ with 
$0<s<T$.
In the standard manner 
we conclude that  for $0<s<T$
\begin{equation}\label{Galerkin}\begin{split}
\frac{1}{2}\|u^{ \varepsilon,n}(s)\|^2_{2}+\int_{Q^s} A^\varepsilon(t,x,\nabla u^{ \varepsilon,n})\cdot \nabla u^{ \varepsilon,n}\, dx\,dt\\=\frac{1}{2}
\|u^{ \varepsilon,n}(0)\|^2_{2}+\int_0^s\langle f,u^{ \varepsilon,n}\rangle dt
\end{split}\end{equation}
holds. 
We estimate the right-hand side as follows 
\begin{equation}\begin{split}
|\int_0^s\langle f,u^{ \varepsilon,n}\rangle dt|&\le \int_0^s\|f\|_{\infty}\|\u^{ \varepsilon,n}
\|_{1}dt\le
c \int_0^s\|f\|_{\infty}\|\nabla u^{ \varepsilon,n}\|_{1} dt\\
&\le c\|f\|_{\infty}\int_0^s\|\nabla u^{ \varepsilon,n}\|_{1}dt \\
&\le \frac{c_*}{2}\int_0^s\|\nabla  u^{ \varepsilon,n}\|_{1} dt+K(c_*,Q)\|f\|_{\infty}\\
&\le  \frac{c_*}{2} \int_Q M(t,x,\nabla u^{ \varepsilon,n})dxdt+K\|f\|_{\infty}.\\
\end{split}\end{equation}
Using { (A4)$_\vep$} allows to conclude
\begin{equation}\begin{split}
\sup\limits_{s\in(0,T)}\|u^{ \varepsilon,n}(s)\|_2^2&+c_*\int_QM(t,x,\nabla u^{ \varepsilon,n})
+M^*(t,x,A^\varepsilon(t,x,\nabla u^{ \varepsilon,n})) \,dx\,dt
\\&
\le c(\|u_0\|_2^2+\|f\|_{\infty}+\int_Q k \,dx\,dt
).\end{split}\label{energy-eps}
\end{equation}
From \eqref{aproksymacja} and \eqref{energy-eps} it follows that 
$c_i^{\vep,n}(t) $ is bounded in $L^\infty([0,T])$ and $\frac{d}{dt}c_i^{\vep,n}(t)$ is bounded in $L_{M^*}([0,T])$, hence uniformly integrable in $L^1([0,T])$ and there exists a monotone, continuous $L:\R_+\to\R_+$, with $L(0)=0$ such that
$$\left|\int_{s_1}^{s_2}\frac{d}{dt}c_i^{\vep,n}(t)\,dt\right|\le L(|s_1-s_2|)$$
and thus 
$$|c_i^{\vep,n}(s_1)-c_i^{\vep,n}(s_2)|\le L(|s_1-s_2|).$$
Hence by the Arzel\`a-Ascoli theorem we are able to conclude an existence of uniformly convergent subsequence 
$\{c_i^{\vep_k,n}\}.$
The limit passage with $\varepsilon\to 0$ is done on the level of finite-dimensional problem and follows the similar lines as in~\cite{BuGwMaSw2012}. Nevertheless, we include the details for completeness. 
 In a consequence of \eqref{energy-eps} there exists a subsequence (labelled the same) such that
   \begin{equation}\label{ep-to-zero}\begin{split}
  u^{ \varepsilon,n}\to  u^{ n}\quad &\quad\mbox{strongly in}\quad {\cal C}([0,T];{\cal C}^1(\overline\Omega)),\\
A^\varepsilon (\cdot,\cdot,\nabla u^{ \varepsilon,n})\weakstar A^{ n} \quad &\quad\mbox{ weakly-star  in}\quad L_{M^*}(Q),\\
u^{ \varepsilon,n}_t\weakstar u^{ n}_t\quad&\quad\mbox{ weakly-star  in}\quad L_{M^*}(Q).
\end{split}\end{equation}
With these convergences one immediately obtains 
\begin{equation}\label{Galerkin}
\begin{split}
(u^{ n}_t, \omega_i)+(A^{ n},\nabla \omega_i)
=\langle f,\omega_i\rangle, \qquad i=1,\ldots,n, \\
u^{ n}(0)=P^nu_0.
\end{split}\end{equation}
 To show that $( \nabla u^{ n}, A^{ n})\in{\cal A} $ we will use the equivalence
  of $(i)$ and $(ii)$ in  Lemma~\ref{LS*}. 
 Since $\tilde A$ is the selection of the graph, according to Lemma~\ref{LS*} (a2)  for all $\zeta, B\in\Rd$ and a.a. $(t,x)\in Q$ it holds
\begin{equation}\label{mon}
(\tilde A(t,x,\zeta)-\tilde A(t,x,B))\cdot(\zeta-B)\ge0.
\end{equation}
We shall  add and subtract the term $(\tilde A(t,x,\zeta)-\tilde A(t,x,B))\cdot \nabla u^{ \varepsilon,n}$ and
then integrate  with respect to the probability measure, which  has the
density $K^\varepsilon (\nabla u^{ \varepsilon,n}-\zeta)$ and obtain  that
\begin{equation}\begin{split}\label{mon1}
\int_{\Rd}&(\tilde A(t,x,\zeta)-\tilde A(t,x,B))\cdot (\nabla u^{ \varepsilon,n}-B)K^\varepsilon (\nabla u^{ \varepsilon,n}-\zeta)d\zeta\\
&\ge \int_{\Rd}(\tilde A(t,x,\zeta)-\tilde A(t,x,B))\cdot (\nabla u^{ \varepsilon,n}-\zeta)K^\varepsilon (\nabla u^{ \varepsilon,n}-\zeta)d\zeta.
\end{split}\end{equation}
Consider $|\zeta|\le \|\nabla u^{ \varepsilon,\eta}\|_\infty
+ \vep$. The difference $(\tilde A(t,x,\zeta)-\tilde A(t,x,B))$ can be estimated  by a constant
dependent only  on $\bB$. Hence from  \eqref{mon1} we conclude that
\begin{equation}\label{concl_mon}\begin{split}
\left(\int_{\Rd}\tilde A(t,x,\zeta)K^\varepsilon (\nabla u^{ \varepsilon,n}-\zeta)d\zeta-\tilde A(t,x,B)\right)\cdot(\nabla u^{ \varepsilon,n}-B)\ge\\
-C_n(B) \int_{\Rd}|\nabla u^{ \varepsilon,n}-\zeta|K^\varepsilon (\nabla u^{ \varepsilon,n}-\zeta)d\zeta.
\end{split}\end{equation}
Using the strong convergence \eqref{ep-to-zero}$_1$ we see that the right hand side of \eqref{concl_mon} vanishes as $\vep \to 0_+$ and  we get
\begin{equation}\begin{split}
\liminf\limits_{\vep\to0_+}\left(A^\vep(t,x,\nabla u^{ \varepsilon,n})-\tilde A(t,x,B)\right)\cdot(\nabla u^{ \varepsilon,n}-B)\ge0 \qquad \textrm{ for a.a. } (t,x)\in Q.
\end{split}\end{equation}
The strong convergence of  $\nabla u^{ \varepsilon,n}$ and weak-star  convergence of  
$A^\vep(t,x,
\nabla u^{ \varepsilon,n})$
yields that for all $B\in\Rd$ and for a.a. $(t,x)\in Q$
\begin{equation}
(A^{ n}-\tilde A(t,x,B)) \cdot (\nabla u^{ n}-B)\ge0 \,.
\end{equation}
Thus, Lemma \ref{LS*} yields that 
\begin{equation*}
(\nabla u^{ n}(t,x),A^{ n}(t,x))\in\mA(t,x) \quad \textrm{ for a.a. } (t,x)\in Q\,.
\end{equation*}
 In the next step we shall provide the estimates uniform with respect to $n$. In the same manner we conclude that 
\begin{equation}\begin{split}
\sup\limits_{s\in(0,T)}\|u^{ n}(s)\|_2^2+\int_QM(t,x,\nabla u^{ n})+M^*(t,x,A^{ n}) \,dx\,dt\\
\le c(\|u_0\|_2^2+\|f\|_{\infty}+\|k\|_1)
\end{split}\end{equation}
which implies there exists a subsequence (again labelled the same) such that 
  \begin{equation}\begin{split}
 \nabla u^{ n}\weakstar \nabla u\quad &\quad\mbox{weakly-star in}\quad L_M(Q),\\
  u^{ n}\weak  u\quad &\quad\mbox{weakly in}\quad L^1(0,T;W^{1,1}(\Omega)),\\
A^{ n}\weakstar A\quad &\quad\mbox{ weakly-star  in}\quad L_{M^*}(Q),\\
u^{ n}\weakstar u\quad&\quad\,\mbox{weakly-star in}\,\,L^\infty(0,T;L^2(\Omega)).\\
u^{ n}_t\weakstar u_t\quad&\quad\mbox{ weakly-star  in}\quad W^{-1,\infty}(0,T;L^2(\Omega)).\end{split}\end{equation}
After passing to the limit  in \eqref{Galerkin} we obtain the following limit identity
\begin{equation}\label{limit}
 u_t  -\div {A} =  f
\end{equation}
holding in a distributional sense. To conclude that $(\nabla u, A)\in\mA(t,x)$ we need to establish that 
\eqref{Ass} is satisfied and then apply Lemma~\ref{Minty2}. For this purpose we want 
%
to test  equation \eqref{limit} with $u$.  For this reason consider the prelongation of $u$ on $\Omega\times\R$ such that 
$\nabla u\in L_M(\R\times\Omega)$.
By  Theorem~\ref{Aproksymacja} there exists a sequence $v^j\in{\cal C}_c^\infty(\R\times\Omega
)$ such that 
\begin{equation}\label{zb_w_M}
\nabla  v^j\modular{M} \nabla u\ \mbox{ modularly in}\  L_M(Q) 
\ \mbox{and}\  v^j\to u \ \mbox{strongly in }\ 
L^2(Q)
\end{equation}
and we can write
\begin{equation}
\frac{\partial v^j}{\partial t}=\div v_A^j+v_f^j\quad\mbox{and}\quad
\frac{\partial u}{\partial t} =\div v_A+v_f
\end{equation}
with 
\begin{equation}\label{zb_w_M*}
 v^j_A\modular{M^*}  v_A \ \mbox{ modularly in }\ L_{M^*}(Q)  
 \quad\mbox{and}\quad v^j_f\to v_f \mbox{ strongly in } L^2(Q).
 \end{equation}
Although we cannot test \eqref{limit} directly with $u$, but we can test with $v^j$ and then pass to the limit with $j\to\infty$. Indeed, 
\begin{equation}\label{poch}
\begin{split}
 \left\langle  u,  \frac{\partial v^j}{\partial t}\right\rangle &= 
 \left\langle  u-v^j,  \frac{\partial v^j}{\partial t}\right\rangle
 +\left\langle  v^j,  \frac{\partial v^j}{\partial t}\right\rangle=: I^j_1+I^j_2.
\end{split}
\end{equation}
We  observe that for  $0<s_0<s<T$ it follows
\begin{equation}
I_2^j=\int_{s_0}^s\int_\Omega v^j \frac{\partial v^j}{\partial t}\;dx\;dt
=\frac{1}{2}\left[\int_\Omega (v^j)^2\;dx\right]_{s_0}^{s}=
\frac{1}{2}\left(\| v^j(s)\|_{L^2(\Omega)}-\|v^j(s_0)\|_{L^2(\Omega)}\right). 
\end{equation}
Hence passing to the limit with $j\to\infty$ we immediately observe that 
\begin{equation}
\lim\limits_{j\to\infty}I^j_2=\frac{1}{2}\left(\| u(s)\|_{L^2(\Omega)}-\|u(s_0)\|_{L^2(\Omega)}\right).
\end{equation}
In the limit the term $I^j_1$ vanishes, indeed
\begin{equation}
I^j_1=\int_Q (u-v^j) (\div v_A^j+v_f^j)\;dx\;dt=\int_Q (\nabla v^j-\nabla u) \;v_A^j+
(u-v^j)\;v_f^j \;dx\;dt.
\end{equation}
Since  \eqref{zb_w_M} and \eqref{zb_w_M*} hold, we conclude with help of Proposition~\ref{product} the convergence of the first product and
the second follows immediately. 

Passing to the limit with $j\to\infty$ in the remaining terms is obvious. 
We are aiming to show that the identity
\begin{equation}
\frac{1}{2}\|u(s)\|_2^2-\frac{1}{2}\|u_0\|_2^2+\int_{Q^s}A\cdot \nabla u \,dx\,dt=\int_{Q_s} fu \,dx\,dt, 
\end{equation}
is satisfied which according to \eqref{limit}  holds for some $0<s_0<T$, not necessarily  equal to zero. 

To pass to the limit with $s_0\to0$ we need to establish the weak continuity of
$u$ in $L^2(\Omega)$ with respect to time. For this purpose we consider  
the sequence $\{\frac{d\u^n}{dt}\}$ and provide the uniform estimates. 
Let
 $\varphi\in L^\infty(0,T;W^{r,2}_0(\Omega))$, $\|\varphi\|_{L^\infty(0,T;W^{r,2}_0)}\leq 1$, where  
 $r>\frac{d}{2}+1$. Observe that
	\begin{equation*}
	\left\langle\frac{du^n}{dt}, \varphi \right \rangle=
	\left\langle\frac{du^n}{dt}, P^n\varphi\right \rangle
	= -\int_\Omega
	 A^n\cdot \nabla(P^n\varphi)\,dx\\
	 +\int_\Omega f\cdot P^n\varphi\,dx.
	\end{equation*}
Since $\|P^n\varphi\|_{W^{r,2}_0}\le\|\varphi\|_{W^{r,2}_0}$ and 
$W^{r-1,2}(\Omega)\subset L^\infty(\Omega)$ we estimate as follows 
	\begin{equation}\label{dt1}
	\begin{split}
	&\Big{|}\int_0^T\int_\Omega  A^n\cdot \nabla(P^n\varphi)
	dxdt\Big{|}\le \int_0^T\|
	 A^n\|_{L^1(\Omega)}\|\nabla(P^n\varphi)\|_{L^\infty(\Omega)}dt\\
	&\le c\int_0^T\| A^n\|_{L^1(\Omega)}\|P^n\varphi\|_{W^{r,2}_0}dt
	\le c
	\| A^n\|_{L^1(Q)}\|\varphi\|_{L^\infty(0,T;W^{r,2}_0)}.
\end{split}	\end{equation}
The estimates for the term containing $f$ are obvious. 
Hence we conclude that $\frac{du^n}{dt}$ is bounded in
$L^1(0,T;W^{-r,2}(\Omega))$. 
From the energy estimates and Lemma~\ref{uni-int} we conclude existence of  
a monotone, continuous function  $L:\R_+\to\R_+$, with $L(0)=0$ which 
is  independent of $n$  and
	$$\int_{s_1}^{s_2}\| A^n\|_{L^1(\Omega)}
	\le L(|s_1-s_2|)$$
for any $s_1,s_2\in[0,T]$. Conseqently, estimate \Ref{dt1} provides that
	$$\left|\int_{s_1}^{s_2}\left\langle\frac{d u^n}{dt},\varphi\right\rangle dt\right|	\le
	L(|s_1-s_2|)$$
for all $\varphi$ with 
${\rm supp}\ \varphi\subset(s_1,s_2)\subset[0,T]$ and 
$\|\varphi\|_{L^\infty(0,T;W^{r,2}_0)}\le 1$. 
Since 
	\begin{equation}\begin{split}
	\|u^n(s_1)-u^n(s_2)\|_{W^{-r,2}}
	=
	\sup\limits_{\|\psi\|_{W^{r,2}_0}\le1}\left|\left\langle
	\int_{s_1}^{s_2}\frac{du^n(t)}{dt},\psi\right\rangle\right |
	\end{split}\end{equation}
then
	\begin{equation}
	\label{equi}
	\sup\limits_{n\in\N}\|u^n(s_1)-u^n(s_2)\|_{W^{-r,2}}\le L(|s_1-s_2|),
	\end{equation}
which provides that 	the family of functions $\u^n:[0,T]\to W^{-r,2}(\Omega)$ is equicontinuous. 
Together with a uniform bound in $L^\infty(0,T;L^2(\Omega))$ it yields that the sequence $\{\u^n\}$ is relatively compact 
in  ${\cal C}([0,T];W^{-r,2}(\Omega))$ and we have $\u\in {\cal C}([0,T];W^{-r,2}(\Omega))$.  Consequently we can choose a sequence 
$\{s_0^i \}_i$, 
$s_0^i \to 0^+$ as $i\to\infty$ such that 
%
%
	\begin{equation}
	\u(s_0^i){{\stackrel{i\to \infty}{\longrightarrow\,}}}\u(0)\quad\mbox{in }W^{-r,2}(\Omega).
	\end{equation}
The limit coincides with the weak limit of $\{\u(s^i_0)\}$ in $L^2(\Omega)$
and hence
%
%
we conclude
	\begin{equation}\label{liminfu0}
	\liminf\limits_{i\to\infty}\|\u(s_0)\|_{L^2(\Omega)}\geq\|\u_0\|_{L^2(\Omega)}.
	\end{equation} 
Consequently we obtain from \eqref{Galerkin} for any   Lebesgue point $s$  of $\u$ that 
	\begin{equation}\label{777}
	\begin{split}
	\limsup\limits_{n\to\infty}\int_{Q_s} A(t,x,\nabla\u^n)\cdot \nabla\u^n 
	& = 
	 \frac{1}{2} \|\u_0\|^2_{2} -
	\liminf\limits_{k\to\infty} \frac{1}{2}  \|\u^n(s)\|^2_{2}\\
	& \leq
	 \frac{1}{2} \|\u_0\|^2_{2} -
	 \frac{1}{2}  \|\u(s)\|^2_{2}\\
	 & {{\stackrel{(\ref{liminfu0})}{\leq}}}
	 \liminf\limits_{i\to \infty}\left( \frac{1}{2} \|\u(s^i_0)\|^2_{2} -
	 \frac{1}{2}  \|\u(s)\|^2_{2}\right)\\
	  &{=} \lim\limits_{i\to\infty} 
	  \int_{s^i_0}^s\int_\Omega A\cdot\nabla\u dxdt
	  = \int_{0}^s\int_\Omega A\cdot\nabla\u dxdt
	 \end{split}
	\end{equation}
	which is exactly \eqref{Ass} and hence Lemma~\ref{Minty2} completes the proof. 
\section{Examples}
As a basic example of an $M-$graph  captured by the described framework one can mention the graph of a function of a  variable exponent  with a $(t,x)-$dependent exponent, namely  an $N-$function $M(t,x,\xi)=\xi^p(t,x)$.  In such a case we require that 
$p:Q\to[1,\infty)$ is a measurable function such that there exists a constant $H>0$ such that for all 
$x,y\in \Omega, t,s\in[0,T], |x-y|+ |t-s|\le\frac{1}{2}$
\begin{equation}\label{log-p}
|p(t,x)-p(s,y)|\le \frac{H}{\ln\frac{1}{|t-s|+|x-y|}}
\end{equation}
holds. For more details on the appearance of this condition in the theory of variable exponent spaces we refer the reader 
to~\cite{DiHaHaRu2011}. When condition \eqref{log-p} is satisfied we can construct the $N-$functions of very slow or very rapid growth, e.g. $M_1(t,x,\xi)=(e^{\xi})^{p(t,x)}-1$ or  $M_2(t,x,\xi)= \xi^{p(t,x)}\ln(\xi+1).$ Since the problem was introduced in the language of maximal monotone graphs the presented framework also captures the case of jumps with respect to $\xi$, e.g.  
the following case is admissible
\begin{equation}
M(t,x,\xi)=\left\{
\begin{array}{rlc}
M_2(t,x,\xi)&{\rm for}&\xi<1,(t,x)\in Q,\\[1ex]
M_1(t,x,\xi)&{\rm for}&\xi>1,(t,x)\in Q,\\[1ex]
[\ln 2,e^{p(t,x)}-1]&{\rm for}&\xi=1,(t,x)\in Q.
\end{array}\right.
\end{equation}


\appendix
\section{Domain $\Omega$}
\noindent
{\bf Proof of Proposition \ref{psi-delta}.}
Since $\Omega$ is a bounded domain, then $\bar\Omega$ is a compact set. Let 
$\{\Phi_\alpha, \alpha\in I\}$ be the atlas of $\bar\Omega$ and define by $U_\alpha:={\rm dom} (\Phi_\alpha)$. The sets $U_\alpha$ are open in $\R^d$. Let us now choose the sets which have nonempty intersection with a boundary, we can number them $\alpha=1,\ldots,\ell$, hence 
for these $\alpha's$ we have $U_\alpha\cap \partial \Omega\neq\emptyset$ and  $\partial \Omega\subset 
\bigcup_{\alpha=1,\ldots,\ell} U_\alpha$.
The boundary of  $\Omega$ is a  $C^1-$submanifold, and since it is a compact set, then 
without loss of generality we may assume that  for $\alpha=1,\ldots,\ell$ each  $\Phi_\alpha(U_\alpha)$ is a  ball of the same radius and  $\Phi_\alpha(\partial\Omega\cap U_\alpha)$ is the intersection of a ball 
$\Phi_\alpha(U_\alpha)$ with a hyperplane, which divides  the ball into two halves. Moreover assume the ball is centered at the origin and the hyperplane is orthogonal to the basis vector of $\R^d$, namely $e_d=(0,\ldots,0,1)$.
To construct the mapping $\Psi_1^\delta$ first we map $U_1$ for the set $\Phi(U_1)$. 
For simplicity let us use the notation 
$(x_1, \ldots, x_{d-1},x_{d})=(x',x_d).$ First define  a nonnegative function
\begin{equation}\label{T}
T_\epsilon(x',0):=\left\{
\begin{array}{rcl}
1&{\rm for}&\sqrt{\sum_{i=1}^{d-1}|x_i|^2}\le 1-\epsilon,\\[1ex]
{\rm smooth}&{\rm for}&\sqrt{\sum_{i=1}^{d-1}|x_i|^2}\in(1-\epsilon,1),\\[1ex]
0&{\rm for}&\sqrt{\sum_{i=1}^{d-1}|x_i|^2}= 1
\end{array}\right.
\end{equation}
and such that $\nabla T_\epsilon$ on the set  ${\sum_{i=1}^{d-1}|x_i|^2}= 1$ is equal to zero. 
Also without loss of generality we may assume that for each $x\in\partial \Omega$ there exists
$\alpha\in\{1,\ldots,\ell\}$ such that $T_\epsilon(\Phi_\alpha(x))=1$ and 
$(x',T_\epsilon(\Phi_\alpha(x))
\in\Phi_\alpha(U_\alpha)$ for each $x\in U_\alpha$.
Then for $1>\delta>0$ the mapping $\gamma^\delta:\Phi_\alpha(U_\alpha)\to\Phi_\alpha(U_\alpha)$ is defined by
\begin{equation}\label{gamma}
\gamma^\delta(x',x_d)=
\left\{\begin{array}{lcl}
(x',T_\epsilon(x',0)+(1-\delta)(x_d-T_\epsilon(x',0))&{\rm for}&x_d-T_\epsilon(x',0)<0,\\[1ex]
(x',x_d)&{\rm for}&x_d-T_\epsilon(x',0)\ge0.
\end{array}\right.
\end{equation}
Now we are ready to start to construct the function $\Psi^\delta$ which will be a composition 
of consequent mappings. First
define 
\begin{equation}\label{Psi1}
\Psi_1^\delta(x)=\left\{\begin{array}{lcl}
\Phi_1^{-1}(\gamma^\delta(\Phi_1(x)))&{\rm for}&x\in U_1,\\
x&{\rm for}& x\in\Omega\setminus U_1,
\end{array}\right.
\end{equation}
and
\begin{equation}\label{Psi2}
\Psi_\alpha^\delta(x)=\left\{\begin{array}{lcl}
\Phi_\alpha^{-1}(\gamma^\delta(\Phi_\alpha(\Psi_{\alpha-1}(x))))&{\rm for}&x\in U_\alpha,\\[1ex]
\Psi_{\alpha-1}(x)&{\rm for}& x\in\Omega\setminus U_\alpha.
\end{array}\right.
\end{equation}
Finally $\Psi^\delta:=\Psi^\delta_\ell.$
One can easily observe that 
\begin{equation}
\sup_{y\in\Phi_\alpha(U_\alpha)}\nabla \gamma^\delta(y)\to{\bf 1}
\end{equation}
and 
\begin{equation}
\sup_{y\in\Phi_\alpha(U_\alpha)} |\gamma^\delta(y)-y|\le\delta.
\end{equation}
The last one immediately implies the property $(iii)$ of the proposition. 
To conclude $(i)$ and $(ii)$ we shall use the Lipschitz continuity of the functions $\Phi_\ell$ and 
$\Phi^{-1}_\ell$ with Lipschitz constants $L_\Phi$ and $L_{\Phi^{-1}}$ respectively. Then
\begin{equation}
\begin{split}
|\Psi^\delta(x)-x|&=|\Phi^{-1}_\ell(\gamma^\delta(\Phi_\ell(x)))-\Phi_\ell^{-1}(\Phi_\ell(x))|\\
&\le L_{\Phi^{-1}}|\gamma^\delta(\Phi_\ell(x)))-\Phi_\ell(x)|\le  L_{\Phi^{-1}}\delta
\end{split}
\end{equation}
and 
\begin{equation}
\begin{split}
|\Psi^\delta(x)-y|&=|\Phi^{-1}_\ell(\gamma^\delta(\Phi_\ell(x)))-y|\\
&\ge \frac{1}{L_{\Phi}}|\gamma^\delta(\Phi_\ell(x))-\Phi_\ell(y)|\ge  \frac{\delta}{L_{\Phi}}.
\end{split}
\end{equation}

\bigskip

\section{Selections and convergence in multi-valued terms}\label{selections}
Let $\mA$ be a maximal monotone graph satisfying {(A1)}--{ (A5)}. We call 
a mapping $\tilde A:Q\times \R^{d}\to\R^{d}$ a  selection of $\mA$
if it 
   assigns for a.a. $(t,x)\in Q$ to each  $\bB\in\R^{d}$
exactly one value $\tilde A(t,x,\bB)\in \R^{d}$ such that $(\bB,\tilde A(t,x,\bB))\in\mA(t,x)$.
 One immediately observes that  each such a selection $\tilde A$  is monotone and conditin  {(A4)} implies that  for all $\xi\in
 \R^{d}$ and a.a. $(t,x)\in Q$
\begin{enumerate}
\item[{ (A4$^*$)}]  $\quad\tilde A(t,x,\xi)\cdot\xi\ge -k(x,t)+c_*(M(t,x,|\xi|)+M^*(t,x,|\tilde A(t,x,\xi)|).$ 
\end{enumerate}
Also condition { (A3)} implies for a selection the following property  (see also~\cite{AlAm}) 
\begin{enumerate}
\item[{ (A3$^*$)}] For $(\xi,\bS)\in\R^{d}\times\R^d$:
$${\rm if}\  (\bS-\tilde A(t,x,\bB),\xi-\bB)\ge0 \textrm{ for all } \bB\in\R^{d},
\textrm{ then } (\xi,\bS)\in\mA(t,x).$$
\end{enumerate}
In general, a selection of the graph $\mA$  does not have to be a Borel function, however there is a selection
$\tilde A$ that is a Borel function, see e.g. \cite{AubinFrankowska}, and only such  a selection is here considered.
\begin{lemma}[Properties of $\tilde A$] \label{LS*}
Let $\mathcal{A}(t,x)$ be maximal monotone $M$-graph satisfying { (A1)}--{ (A5)} with measurable selection
$\tilde A:Q\times \mathbb{R}^{d} \to \mathbb{R}^{d}$. Then $\tilde A$
satisfies the following conditions:
\begin{enumerate}
\item [{\it (a1)}]$\mathrm{Dom}\, \tilde A(t,x,\cdot) = \Rd$ a.e. in $Q$;
\item [{\it (a2)}]$\tilde A$ is  monotone, i.e. for every $\xi_1$,
$\xi_2 \in \Rd$ and a.a. $(t,x)\in Q$
\begin{equation} \label{monot}
(\tilde A(t,x,\xi_1) - \tilde A(t,x,\xi_2))\cdot( \xi_1 - \xi_2) \ge 0;
\end{equation}
\item [{\it (a3)}] There are non-negative $k\in
L^1(Q)$,   $c_*>0$ and $N$-function  $M$ such that for all $\xi\in\Rd$ the function
 $\tilde A$ satisfies
\begin{equation} \label{growthS*}
\tilde A \cdot \xi \geq -k(t,x) +c_*(M(t,x,|\xi|) + M^*(t,x,|\tilde A|))
\end{equation}
\end{enumerate}
Moreover, let
$U$ be a dense set in $\Rd$ and
$(\bB,\tilde A(t,x,\bB)) \in \mA(t,x)$ for a.a. $(t,x)\in Q$ and for all $\bB\in U$. Let also
  $(\xi, \bS)\in \Rd \times \Rd$. Then the following conditions are equivalent:
\begin{equation*}\begin{split}
\textrm{(i)} \quad &({\bS} - \tilde A(t,x,\bB))\cdot( {\xi} - \bB) \geq 0 \quad \textrm{ for all } \quad (\bB,\tilde A(t,x,\bB))\in\mA(t,x)\,,\\
\textrm{(ii)} \quad & (\xi, \bS) \in \mA(t,x).
\end{split} 
\end{equation*}
\end{lemma}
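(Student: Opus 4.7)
The first three properties (a1), (a2), (a3) I would handle as a warm-up: for every $\xi\in\R^d$ the pair $(\xi,\tilde A(t,x,\xi))$ lies in $\mA(t,x)$ by the very definition of the selection, so (a1) records the existence part of (A5), (a2) is (A2) applied to two such pairs, and (a3) is (A4) evaluated at $(\xi,\tilde A(t,x,\xi))$. No substantial work is required here.

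The substantive content is the equivalence. The direction (ii)$\Rightarrow$(i) is again a one-line consequence of (A2) applied to the two graph points $(\xi,\bS)$ and $(\bB,\tilde A(t,x,\bB))$. My plan for (i)$\Rightarrow$(ii) is to appeal to the maximality assumption (A3): it suffices to prove
\[
(\bS-A_1)\cdot(\xi-\xi_1)\ge 0 \qquad\text{for every }(\xi_1,A_1)\in\mA(t,x),
\]
after which (A3) delivers $(\xi,\bS)\in\mA(t,x)$. To achieve this I would fix such a pair and test against the convex combination $\bB_\lambda:=\lambda\xi+(1-\lambda)\xi_1$, $\lambda\in(0,1)$. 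Since $\xi-\bB_\lambda=(1-\lambda)(\xi-\xi_1)$ and $\xi_1-\bB_\lambda=-\lambda(\xi-\xi_1)$, coupling the monotonicity of $\mA$ between $(\xi_1,A_1)$ and $(\bB_\lambda,\tilde A(t,x,\bB_\lambda))$ with the hypothesis (i) applied at $\bB_\lambda$, and dividing by the positive factors $\lambda$ and $1-\lambda$, produces the sandwich
\[
A_1\cdot(\xi-\xi_1)\;\le\;\tilde A(t,x,\bB_\lambda)\cdot(\xi-\xi_1)\;\le\;\bS\cdot(\xi-\xi_1),
\]
which is exactly the required inequality.

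The point I expect to be the main obstacle is that $\bB_\lambda$ need not lie in the prescribed dense set $U$, so both ingredients of the sandwich are only literally available along an approximating sequence $\bB_n\in U$ with $\bB_n\to \bB_\lambda$. To close the argument one has to extract a convergent subsequence of $\{\tilde A(t,x,\bB_n)\}$, and this reduces to local boundedness of the monotone single-valued map $\tilde A(t,x,\cdot)$. I would obtain this either from the classical fact that a monotone map defined on all of $\R^d$ is automatically locally bounded on the interior of its domain, or more explicitly by combining the coercivity bound (a3) with the monotonicity inequality against a finite set of test points from $U$ surrounding $\bB_\lambda$, which pins down $|\tilde A(t,x,\bB_n)|$ uniformly. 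Once local boundedness is in hand, along a subsequence $\tilde A(t,x,\bB_n)\to A^\ast$, the two scalar inequalities pass to the limit, the sandwich above is recovered, and maximality (A3) finishes the argument that $(\xi,\bS)\in\mA(t,x)$.
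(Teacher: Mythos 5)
The paper does not supply its own proof of Lemma~\ref{LS*}; it simply refers the reader to Francfort--Murat--Tartar for the $L^p$ analogue, so there is no in-text argument to compare against. Judged on its own terms your argument is sound and is the standard one for this kind of equivalence. Properties (a1)--(a3) and the implication (ii)$\Rightarrow$(i) are indeed immediate, and for (i)$\Rightarrow$(ii) the chord trick is exactly right: fixing $(\xi_1,A_1)\in\mA(t,x)$, testing at $\bB_\lambda=\lambda\xi+(1-\lambda)\xi_1$ with any fixed $\lambda\in(0,1)$, and using that $\xi-\bB_\lambda$ and $\xi_1-\bB_\lambda$ are opposite positive multiples of $\xi-\xi_1$ yields the two inequalities
\begin{equation*}
A_1\cdot(\xi-\xi_1)\le \tilde A(t,x,\bB_\lambda)\cdot(\xi-\xi_1)\le \bS\cdot(\xi-\xi_1),
\end{equation*}
whose endpoints give precisely what (A3) needs. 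Note you do not actually need to send $\lambda$ anywhere: a single $\lambda$ suffices, since you only use the composite inequality, not the middle term.

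You correctly flag the only real technical gap, namely that $\bB_\lambda$ may lie outside the dense set $U$, and both of your proposed repairs are valid. The local boundedness of a monotone single-valued map with full domain in $\R^d$ is classical (Rockafellar/Brezis), and it can alternatively be read off from (a3) exactly as in the appendix's proof of Lemma~\ref{Minty2}: if $|\bB_n|\le R$ then
\begin{equation*}
c_*\,M^*(t,x,|\tilde A(t,x,\bB_n)|)\le \tilde A(t,x,\bB_n)\cdot\bB_n+k(t,x)\le R\,|\tilde A(t,x,\bB_n)|+k(t,x),
\end{equation*}
and superlinearity of $M^*$ forces a uniform bound on $|\tilde A(t,x,\bB_n)|$, so a subsequence converges and the two scalar inequalities pass to the limit. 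One small remark: the appendix records the consequence (A3$^*$) of (A3), which says that it suffices to test against selection pairs $(\bB,\tilde A(t,x,\bB))$ over all $\bB\in\R^d$; once you have extended (i) from $U$ to all of $\R^d$ by the density/local-boundedness step, you could invoke (A3$^*$) directly and skip the chord argument. Your route reproves (A3$^*$) along the way, which is perfectly fine and keeps the proof self-contained, but it is worth knowing the two are interchangeable here.
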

For the proof in $L^p$ setting see~\cite{FrMuTa2004}.
Next we shall recall  the proof of Lemma~\ref{Minty2}, cf.~\cite{BuGwMaRaSw2012}. The essence of the presented framework is a generalization of  the Minty method in two directions: to nonreflexive spaces and to maximal monotone graphs. 
\smallskip

\begin{proof} Let $\tilde A$ be a selection of the graph $\mA$ and $(\nabla u^n,A^n)\in\mA.$
The monotonicity  provides that
\begin{equation}\label{43}
(\tilde A(t,x,\bB)-\bS^n))\cdot(\bB-\nabla u^n) \geq 0 \quad \textrm{for all }\, \bB
\in L^{\infty}(Q).
\end{equation}
The limit passage in the term  $\tilde A(t,x,\bB)\nabla u^n$ shall be provided by the uniform integrability of the sequence
$\{\nabla u^n\}$, which is the consequence of the fact that $\nabla u^n\in L_M(Q)$, cf.~Lemma~\ref{uni-int}. To conclude the boundedness of  $\tilde A(t,x,\bB)$ define the set
\begin{equation}
Q_{(K)}:=\{(t,x)\in Q: |k(t,x)|\le K\},
\end{equation}
where  $k\in L^1(Q)$ is the function appearing in the assumption (A4). 
%
Let $\bB\in L^\infty(Q)$ and assume that
 $\tilde A(t,x,\bB)$ is unbounded in $Q_{(K)}$. It follows from {(A4)} and nonnegativity of  $M$  that
$$
|\bB|\ge\frac{ c_*M^\ast(t,x,|\tilde A(t,x,\bB)|)-k(x,t)}{|\tilde A(t,x,\bB)|}.
$$
Since $M^*$ is an $N-$function, then the right-hand side tends to infinity, which contradicts that $\bB$ is bounded. 
Thus,  after integrating \eqref{43} over $Q_{(K)}$ we obtain
\begin{equation}\label{44}
\int_{Q_{(K)}} \bS^n\cdot \nabla u^n \,d x\,d t  \geq \int_{Q_{(K)}}
\bS^n\cdot\bB \,d x\,d t + \int_{Q_{(K)}} \tilde A(t,x,\bB)\cdot(\nabla u^n-\bB)\,d x\,d t.
\end{equation}
Letting  $n\to\infty$ in \eqref{44}, we conclude from \eqref{1.26}--\eqref{Ass} that
\begin{equation}\label{45}
\int_{Q_{(K)}}\bS\cdot \nabla u \,d x\,d t \geq \int_{Q_{(K)}} \bS\cdot\bB \,d x\,d t +
\int_{Q_{(K)}} \tilde A(t,x,\bB)\cdot(\nabla u-\bB)\,d x\,d t
\end{equation}
which we rearrange as follows
\begin{equation}\label{46}
\int_{Q_{(K)}} (\tilde A(t,x,\bB)-\bS)\cdot(\bB-\nabla u)\,d x\,d t \geq 0 \quad \textrm{for all }\, \bB
\in L^{\infty}(Q).
\end{equation}
 For  any $j>0$ we define the set
$Q_{j} := \{z\in Q_{(K)};  |\nabla u(z)|\leq j\}
$
and by $ \bbbone_{Q_{j}}$ we mean  the characteristic function of $Q_j$. Since $|\nabla u| \in L^1(Q)$ due to \eqref{1.26} we observe that
\begin{equation}
|Q\setminus Q_j|\le \frac{C}{j}. \label{smallset}
\end{equation}
For arbitrary  $i,j\in\N, \ 0<j<i$ we choose $B$ in \eqref{46} in the following  form
$$
\bB:=\nabla u \bbbone_{Q_{i}} + h\,W\bbbone_{Q_{j}},\ h>0, \ W\in L^{\infty}(Q).
$$
Thus
\begin{equation}\label{13} \int_{Q_{j}}(\tilde A(t,x,\nabla u+h W)-\bS)\cdot W \,d x\,d t \geq \frac{1}{h}\int_{Q_{(K)} \setminus Q_{i}} \left(\tilde A(t,x,0)\cdot \nabla u - \bS\cdot \nabla u\right) \,d x\,d t.
\end{equation}
For passing to the limit with $i\to\infty$ in \eqref{13} we use Lebesgue dominated convergence theorem concluding from  
\eqref{smallset} and from  
%
\begin{equation}\label{war}
\int_{Q} \left|\tilde A(t,x,0)\cdot \nabla u - \bS\cdot \nabla u\right|\,d x\,d t<\infty
\end{equation}
that
%
\begin{equation}\label{int-on-set}
\lim\limits_{i\to\infty}\int_{Q_{(K)} \setminus Q_{i}}
\left(\tilde A(t,x,0)\cdot \nabla u+\bS\cdot\nabla u\right) \,d x\,d t=0.
\end{equation}
Note that \eqref{war}  easily follows from H\"{o}lder inequality and boundedness  of the terms in appropriate 
Musielak-Orlicz  spaces.
A  direct consequence of \eqref{int-on-set} is  that
\begin{equation}\label{MB1-1}
\int_{Q_{j}}(\tilde A(t,x,\nabla u+h W)-\bS)\cdot W \,d x\,d t\geq 0\quad \textrm{for all }j\in\N\,.
\end{equation}
Let now $h\to 0_+$. Using the definition of $Q_j$ it is easy to see that for a subsequence
\begin{align*}
\tilde A(\cdot,\cdot,\nabla u+h W)&\rightharpoonup \bar{\bS} &&\textrm{weakly in }L^2(Q_j),\\
\nabla u+h W &\to \nabla u &&\textrm{strongly  in }L^2(Q_j),\\
(\nabla u+h W,\tilde A(\cdot,\cdot,\nabla u+h W))&\in {\mathcal A}(t,x) &&\textrm{a.e. in }Q_j.
\end{align*}
We observe that for an arbitrary fixed matrix $\bB\in \mathbb{R}^{d}$ by the monotonicity of the graph
\begin{equation}\label{zB}
\int_{Q_j}(\tilde A(t,x,\nabla u + hW)-\tilde A(t,x,B))\cdot(\nabla u+hW -B)\,dx\,dt\ge 0
\end{equation}
Hence passing to the limit with $h\to0_+$ in \eqref{zB}
 we conclude that 
\begin{equation}\label{zB-lim}
\int_{Q_j}(\bar A-\tilde A(t,x,B))\cdot(\nabla u -B)\ge 0
\end{equation}
which yields by (A3$^*$) that 
%
\begin{equation}
(\nabla u(t,x),\bar\bS(t,x))\in\mA(t,x) \qquad \textrm{ a.e. in }Q_j\,. \label{begr}
\end{equation}
Moreover, since $\nabla u$ is bounded in $Q_j$ then also $\bar\bS$ is bounded in $Q_j$ due to \eqref{begr} and again the properties of an $N-$function. Finally, letting  $h\to 0_+$ in \eqref{MB1-1}, we have
$$
\int_{Q_j}(\bar \bS-\bS)\cdot W \,d x\,d t \ge 0 \qquad \textrm{for all } W\in L^{\infty}(Q_j).
$$
Setting  $ W:=-\frac{(\bar\bS-\bS)}{|\bar\bS-\bS|}\bbbone _{\{\bar\bS\neq\bS\}}$   yields
$$
\int_{Q_j}|\bar\bS-\bS|\,d x\,d t\leq0
$$
and therefore \eqref{begr} implies that  $(\nabla u,\bS)\in\mA(t,x)$ a.e. in $Q_j$. But since $j$ was arbitrary, we use \eqref{smallset} and conclude that
 $(\nabla u,\bS)\in\mA(t,x)$ a.e. in $Q_{(K)}$, and then by the arbitrariness of $K$ we finally conclude that 
  $(\nabla u,\bS)\in\mA(t,x)$ a.e. in $Q$.
 \qed
\end{proof}

We complete this part with a short comment on relations to different approaches 
 to multi-valued problems. Here we want to recall the relation between 
$(t,x)-$dependent maximal monotone graphs and 1-Lipschitz functions. 
Following the argumentation in \cite{FrMuTa2004} and \cite{GwZa2007} one concludes that for each graph satisfying (A1)-(A5)
there exists a function $\Phi: Q\times\R^d\to\R^d$ such that
\begin{equation}
{\cal A}(t,x)=\{(e,d)\in\R^d\times\R^d\,|\,d-e=\Phi(t,x,d+e)\}
\end{equation}
and $\Phi$ satisfies the following conditions:
\begin{enumerate}
\item $\Phi$ is a Carath\'eodory function,
\item $\Phi(t,x,\cdot)$ is a contraction for almost all $(t,x)\in Q$,
\item defining the functions $d,e:Q\times\R^d\to\R^d$ as follows 
\begin{equation}\begin{split}
d(t,x,\xi)&=\frac{1}{2}(\xi+\Phi(t,x,\xi))\\
e(t,x,\xi)&=\frac{1}{2}(\xi-\Phi(t,x,\xi))
\end{split}\end{equation}
the following estimate holds
\begin{equation}
d(t,x,\xi)\cdot e(t,x,\xi)\geq -k(t,x) +c_*(M(t,x,|d(t,x,\xi)|) + M^*(t,x,|e(t,x,\xi)|)),
\end{equation}
\item $\Phi(t,x,0)=0$ for almost all $(t,x)\in Q$.
\end{enumerate}
This connection is essentially used for elliptic and parabolic problems including multi-valued terms, see~\cite{GwZa2007}.

\section{Lipschitz boundary}
In the current section we shall comment on the case of less regular boundaries, namely the case of Lipschitz boundary, where the construction presented in Section~\ref{preliminaries} fails.
Lipschitz regularity of the   boundary provides  there exists a finite family of star-shaped Lipschitz domains $\{\Omega_i\}$ such that (cf.~\cite{Novotny})
$$\Omega=\bigcup\limits_{i\in J}\Omega_i.$$
We introduce the partition of unity $\theta_i$ with $0\le\theta_i\le1,\, \theta_i\in {\cal C}^\infty_c(\Omega_i), \,
{\rm supp} \,\theta_i=\Omega_i, \sum_{i\in J}\theta_i(x)=1$ for $x\in\Omega$.  Then we are dealing with the term
$\nabla (\theta_iu)$ and to provide it is in $L_M(Q)$ we need that both $u\nabla \theta_i$ and 
$\theta_i\nabla u$ are in  $L_M(Q)$.

For this reason we  shall need a kind of Poincar\'e inequality in Musielak--Orlicz spaces.
For each multi-index  $\alpha$ denote by $D^\alpha_x$ the distributional derivative of order $\alpha$ with respect 
to the variable $x$. We define the 
Musielak--Orlicz--Sobolev  space as follows
$$W^{1,x}L_M(\Omega)=\{u\in L_M(\Omega): D^\alpha_xu\in L_M(\Omega),\  \forall\  |\alpha|\le 1\},$$
which is a Banach space with a norm
$$\|u\|_{1,M}=\sum\limits_{|\alpha|\le1}\|D^\alpha_x u\|_M$$
if only \eqref{int} holds and $\inf_{(t,x)\in Q} M(t,x,1)>0$.
 The classical results for embeddings of Orlicz--Sobolev spaces are due to Donaldson and Trudinger, cf.~\cite{DoTr71}. Later the optimal embedding theorem was established by Cianchi in 
\cite{Ci96}. An  interesting extension concerns  anisotropic spaces, cf.~\cite{Cianchi}. The issue of the embedding of Musielak--Orlicz--Sobolev spaces into Musielak--Orlicz spaces was considered by Fan \cite{Fa2012} under the following 
assumptions\footnote{In   \cite{Fa2012} the case of only $x$ dependent modulars was considered and since this is not the 
main concern of the current paper we shall
not extend it for the $(t,x)-$dependent case. Nevertheless, the dependence on $t$ is not essential here.}
\begin{itemize}
\item[(M2)] 
$M(x,a)=M(x,1)a$ for $ x\in\bar \Omega, a\in[0,1]$. 
\end{itemize}
Note that condition (M2) is only a technical assumption.  
Indeed, define $M_1:\bar \Omega\times \R_+\to\R_+$ by 
\begin{equation}
M_1(x,a):=\left\{\begin{array}{lcl}
M(x,1)a&{\rm if}&x\in\bar \Omega, \ a\in[0,1],\\[1ex]
M(x,a)&{\rm if}&x\in\bar \Omega, \ a>1.
\end{array}\right.
\end{equation}
Then in the case of bounded domain $\Omega$ it holds $L_M(\Omega)=L_{M_1}(\Omega)$ and 
$W^{1,x}L_M(\Omega)=W^{1,x}L_{M_1}(\Omega)$, cf.~\cite{Musielak}, hence one could consider $M_1$ instead of $M$.
Note that the assumptions formulated up till now on $M$ are  sufficient for the existence of an inverse function $M^{-1}(x,\cdot)$ to $M(x,\cdot).$ We will use it for the definition of $M^{-1}_*$ as follows
\begin{equation}
M^{-1}_*(x,\xi):=\int_0^\xi\frac{M^{-1}(x,\zeta)}{\zeta^\frac{d+1}{d}}d\zeta\quad
\mbox{for }\ x\in\bar \Omega, \ \xi\ge0.
\end{equation}
Condition (M2) provides that the above function is well defined, strictly increasing and concave  for each $x\in \bar \Omega$ and moreover continuously differentiable for $\xi>1$. Define also 
\begin{equation}
\ell(x):=\lim_{a\to\infty}
M^{-1}_*(x,a).
\end{equation} 
Note that $0<\ell(x)\le \infty$ and define the Sobolev conjugate function of $M$, namely $M_*:\bar \Omega\times\R_+\to\R_+$ as follows
\begin{equation}
M_*(x,a):=\left\{\begin{array}{lcl}
s&{\rm if}&x\in\bar \Omega, \ a\in[0,\ell(x)], \ M^{-1}_*(x,s)=a,\\[1ex]
\infty&{\rm if}&x\in\bar \Omega, \ a\ge \ell(x).
\end{array}\right.
\end{equation}
Observe that $M_*$ is also an $N-$function and for all $x\in\bar \Omega$ $M_*(x,\cdot)\in {\cal C}^1((0,\ell(x)))$. 
Having this notation we shall formulate the next assumptions
\begin{itemize}
\item[(M3)] The function
$\ell:\bar \Omega\to(0,\infty]$ is continuous on $\Omega$ and locally Lipschitz  continuous  on 
${\rm Dom}\ (\ell):=\{x: \ell(x)\in\R\}$. 
\item[(M4)]
$M_*$ is locally Lipschitz  continuous on ${\rm Dom}\ (M_*)$ and there exist positive constants
$\delta_0<\frac{1}{d}, c_0$ and $\ell_0<\min_{x\in\bar \Omega} \ell(x)$  such that for all 
$x\in\bar \Omega$ and $\xi\in[\ell_0,\ell(x))$
\begin{equation}
\left|\frac{\partial M_*(x,\xi)}{\partial x_j}\right|\le c_0(M_*(x,\xi))^{1+\delta_0}, \ j=1,\ldots,d,
\end{equation}
provided $\frac{\partial M_*(x,\xi)}{\partial x_j}, \ j=1,\ldots,d$ exists.
\item[(M5)]
Assume that either $\frac{\partial M(x,\xi)}{\partial \xi}$ exists for all $x\in\bar \Omega$ and $\xi\ge 0$ or the following condition is satisfied uniformly for $(t,x)\in Q$
\begin{equation}
\lim\limits_{\xi\to\infty}\frac{\xi\frac{\partial M(x,\xi)}{\partial \xi^+}}{(M(x,\xi))^{1+\frac{1}{d}}}=0
\end{equation}
where by $\frac{\partial M(x,\xi)}{\partial \xi^+}$ we mean the right derivative of $M(x,\cdot)$ at point $\xi$. 
\end{itemize}

Since the Lipschitz continuity of $M_*$ may not always be immediate to verify, note that the Lipschitz continuity of $M$ on 
$\bar \Omega\times \R_+$ implies the Lipschitz continuity 
of $M_*$ on ${\rm Dom}\ (M_*)$.

\begin{lemma} (Poincar\'e inequality)
Let $M$  be an $N-$function satisfying $(M2)-(M5)$. 
Then

\begin{equation}
\|u\|_M\le c\sum\limits_{j=1}^d\|D_j u\|_M
\end{equation}
for all $u\in W^{1,x}_0L_{M}(\Omega)$.
\end{lemma}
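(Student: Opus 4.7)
The plan is to prove the Poincar\'e inequality indirectly, via a Sobolev-type embedding rather than by a direct slicing argument. The naive direct approach of writing
$$|u(x)| \le \int_{\Omega_{x'}} |D_1 u(t,x')|\,dt$$
and invoking Jensen's inequality does not transfer to Musielak-Orlicz spaces in the absence of log-H\"older regularity: after applying $M(x,\cdot)$ and integrating, one faces the mismatch between $M(x_1,x',\cdot)$ and $M(t,x',\cdot)$, which cannot be bridged without an extra modulus-of-continuity assumption on $M$ in $x_1$. Since (M1) is not available in the present setting, we instead exploit the structural conditions (M2)--(M5), which are designed precisely so that the Sobolev conjugate $M_*$ of $M$ is well defined and behaves in a controllable way.

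The first step is to establish the Musielak-Orlicz-Sobolev embedding
$$\|u\|_{M_*}\le c\sum_{j=1}^d \|D_j u\|_M \qquad \text{for all } u\in W^{1,x}_0 L_M(\Omega),$$
following Fan~\cite{Fa2012}. For $u\in\mathcal{C}^\infty_c(\Omega)$ one starts from the pointwise Gagliardo bound $|u(x)|^{d/(d-1)}\le \prod_{j=1}^d\bigl(\int_{\mathbb{R}}|D_j u|\bigr)^{1/(d-1)}$, then applies H\"older's inequality in $L_M$ and iterates in the classical way. The transfer of this chain to the $x$-dependent setting is exactly where (M3)--(M5) intervene: (M3) ensures continuity of the critical exponent $\ell$, (M4) bounds the $x$-derivatives of $M_*$ by $M_*^{1+\delta_0}$ with $\delta_0<1/d$, and (M5) bounds $\partial_\xi M$ by $M^{1+1/d}$, so that the Jacobian-type correction terms produced when moving $M_*(x,\cdot)$ across slices can be absorbed into $M(x,\cdot)$ on the right-hand side.

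The second step is the embedding $L_{M_*}(\Omega)\hookrightarrow L_M(\Omega)$ for the bounded domain $\Omega$. By construction
$$M^{-1}_*(x,\xi)=\int_0^\xi \frac{M^{-1}(x,\zeta)}{\zeta^{(d+1)/d}}\,d\zeta \le M^{-1}(x,\xi)\int_0^\xi \zeta^{-(d+1)/d}\,d\zeta$$
for $\xi$ large (using that $M^{-1}(x,\cdot)$ is increasing), and a parallel inequality in the reverse direction shows that, up to lower order terms integrable over a set of finite measure, $M(x,a)\le M_*(x,a)$ for $a$ beyond a threshold; a standard split argument then gives $\|u\|_M\le c'\|u\|_{M_*}$ on $\Omega$.

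Chaining the two embeddings yields the Poincar\'e inequality on $\mathcal{C}^\infty_c(\Omega)$, after which a modular density argument (Lemma~\ref{lem-dense} together with the stationary analogue of Theorem~\ref{Aproksymacja}) extends it to all of $W^{1,x}_0 L_M(\Omega)$. The main obstacle is the Sobolev embedding of the first step; this is where the somewhat technical-looking hypotheses (M2)--(M5) are actually used, and it is the reason why, in contrast with the setting of Section~\ref{preliminaries}, the smooth construction via the mapping $\Psi^\delta$ cannot simply be replaced by a partition of unity on Lipschitz star-shaped pieces without this extra structural price.
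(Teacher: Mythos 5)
The paper states this lemma without proof; it is taken from Fan~\cite{Fa2012}, which is cited a few lines earlier as the source of the embedding theory under (M2)--(M5). Your proposal reconstructs that hidden argument: prove the Sobolev embedding $W^{1,x}_0 L_M(\Omega)\hookrightarrow L_{M_*}(\Omega)$ using (M2)--(M5), then use $L_{M_*}(\Omega)\hookrightarrow L_M(\Omega)$ on the bounded domain, and conclude. That is indeed the route the paper implicitly intends, and the opening observation (that a naive slicing argument breaks down because $M(x,\cdot)$ cannot be moved across slices without a modulus-of-continuity hypothesis like (M1)) correctly explains why the structural conditions (M2)--(M5) are the right price to pay here.

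However, the key inequality you write for Step~2 is false as stated:
\begin{equation*}
M^{-1}_*(x,\xi)=\int_0^\xi \frac{M^{-1}(x,\zeta)}{\zeta^{(d+1)/d}}\,d\zeta \le M^{-1}(x,\xi)\int_0^\xi \zeta^{-(d+1)/d}\,d\zeta
\end{equation*}
cannot hold, because $\int_0^\xi \zeta^{-(d+1)/d}\,d\zeta=+\infty$ (the exponent $(d+1)/d>1$ makes the integral divergent at $\zeta=0$), so the right-hand side is vacuous. The left-hand side is of course finite, precisely thanks to (M2): for $\zeta\le M(x,1)$ one has $M^{-1}(x,\zeta)=\zeta/M(x,1)$, hence the integrand behaves like $\zeta^{-1/d}$ near $0$, which is integrable. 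The fix is to split the integral at a fixed $\xi_0>0$: the piece over $(0,\xi_0)$ is a harmless constant (uniformly in $x$ under the stated hypotheses), while over $(\xi_0,\xi)$ pulling out $M^{-1}(x,\xi)$ gives
\begin{equation*}
\int_{\xi_0}^\xi \frac{M^{-1}(x,\zeta)}{\zeta^{(d+1)/d}}\,d\zeta \le M^{-1}(x,\xi)\int_{\xi_0}^\infty \zeta^{-(d+1)/d}\,d\zeta = d\,\xi_0^{-1/d}\,M^{-1}(x,\xi),
\end{equation*}
so that $M^{-1}_*(x,\xi)\le C(\xi_0)+c\,M^{-1}(x,\xi)$, which (after inverting and absorbing the constant using the finite measure of $\Omega$) is exactly the comparison $M\lesssim M_*$ needed for $L_{M_*}(\Omega)\hookrightarrow L_M(\Omega)$. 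With that correction, and granting the first step to Fan's embedding theorem, the chain closes as you describe.
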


\bigskip
\noindent
{\bf Acknowledgements }\\
The author was supported by the grant IdP2011/000661.


\noindent

\bibliographystyle{abbrv}
\bibliography{genorlicz}

\end{document}